\documentclass[12pt, reqno]{amsart}
\usepackage{mathrsfs}
\usepackage{amsfonts,amsmath,dsfont,amssymb,amsthm,stmaryrd,bbm,color,comment}
\usepackage[hidelinks]{hyperref}
\usepackage{appendix}
\usepackage[pdftex]{graphicx}
\usepackage{bbm}
\usepackage{array}
\newcolumntype{L}{>{\centering\arraybackslash}m{2.5cm}}
\usepackage{multirow}
\usepackage{xcolor}
\usepackage[percent]{overpic}
\usepackage{algorithm}
\usepackage{algpseudocode}
\usepackage{mathtools}

\newtheorem{thm}{Theorem}[section]
\newtheorem{lem}[thm]{Lemma}
\newtheorem{prop}[thm]{Proposition}
\newtheorem{coro}[thm]{Corollary}

\theoremstyle{definition}
\newtheorem{defn}[thm]{Definition}

\theoremstyle{remark}
\newtheorem{rmk}[thm]{Remark}
\numberwithin{equation}{section}

\newcommand{\Var}{\mathrm{Var}}

\newcommand{\R}{\mathbb{R}}

\newcommand{\Z}{\mathbb{Z}}

\newcommand{\E}{\mathbb{E}}

\def\P{{\mathbb P}}

\begin{document}
\setcounter{page}{1}

\color{black}{

\centerline{}

\centerline{}

\title[Symmetrization resistance for exponential and geometric distributions]{Symmetrization resistance for exponential and geometric distributions}

\author[Jiange Li]{Jiange Li} 

\address{Institute for Advanced Study in Mathematics, Harbin Institute of Technology}
\email{jiange.li@hit.edu.cn}






\begin{abstract}

Given a random variable $X$, an independent random variable $Y$ is called a symmetrizer of $X$ if their sum $X+Y$ is symmetric about the origin. The study of symmetrization resistance asks whether every such $Y$ must contain at least as much randomness as $X$. This problem was previously investigated for binary random variables in terms of variance and Shannon entropy.

In this paper, we establish sharp symmetrization resistance results for exponential and geometric distributions. We prove that every independent symmetrizer $Y$ of an exponential random variable $X$ has variance, Rényi and Tsallis entropies of every positive order at least as large as that of $X$. Parallel results are obtained for integer-valued independent symmetrizers of geometric random variables. Equality in each comparison holds precisely when $Y$ is an independent copy of $-X$. Furthermore, we show the majorization of $X$ over $Y$ via establishing sharp concentration inequalities for $Y$. The proofs crucially rely on a differential/difference inversion formula for exponential/geometric convolution, which converts the symmetrization constraint into a hazard-rate inequality and enables us to identity exponential/geometric distributions as extremal distributions of a corresponding optimization problem.

\end{abstract} 

\maketitle


\section{Introduction}

The symmetrization of a random variable is widely used in probability theory and various other fields. One can simply symmetrize a random variable $X$ by adding an independent copy of $-X$. This leads to the following question: How much randomness is necessary to compensate for the asymmetry of $X$? If $X$ is already symmetric about its mean, then a deterministic translation is sufficient to make it symmetric about the origin, requiring no additional randomness. For asymmetric random variables, however, nontrivial randomness is unavoidable. This motivates the study of symmetrization resistance, which quantifies the minimal amount of randomness needed to symmetrize a random variable.


We call a random variable $Y$ an \textit{independent symmetrizer} of a given random variable $X$ if it is independent of $X$ and the distribution of $X+Y$ is symmetric about the origin. A random variable $X$ is called \textit{variance symmetrization resistant} if every
independent symmetrizer $Y$ satisfies
$$
\Var(Y)\geq \Var(X).
$$
Kagan, Mallows, Shepp, Vanderbei and Vardi \cite{KMSVV99} introduced the problem of symmetrization resistance in connection with the Gaussian approximation of distributions by independent additive perturbations subject to a variance constraint. Using linear programming duality, they proved that asymmetric Bernoulli random variables are
variance symmetrization resistant and that the extremal independent symmetrizer is given by an independent copy of $-X$. Later, Pal \cite{Pal08} gave a probabilistic proof of this result based on Skorokhod embedding and stochastic calculus. Non-commutative analogues were recently studied by Chakraborty \cite{Cha25}.

Madiman and Pollard \cite{MP26, Pol23} recently introduced the notion of \textit{entropic symmetrization resistant}, where variance $\Var(\cdot)$ is replaced by Shannon entropy $H(\cdot)$. They proved that any asymmetric Bernoulli random variable $X$ is entropic symmetrization resistant, i.e.,
$$
H(Y)\geq H(X)
$$
for every independent symmetrizer $Y$, and developed a structural description of the space of symmetrizer distributions.

In the current paper, we focus on two fundamental distributions that can be viewed as continuous and discrete counterparts of each other: the exponential and geometric distributions. Our results show that these distributions are resistant to symmetrization under several measures of randomness, including variance, R\'enyi entropy, and Tsallis entropy. These results provide the first examples of continuous and infinite-support distributions exhibiting strong symmetrization resistance.

\begin{thm} [Variance/Entropic symmetrization resistance]\label{thm:main}
Let $Y$ be an independent symmetrizer of $X$. 
\begin{enumerate}
\item If $X\sim\text{Exp}(\lambda)$ is an exponential random variable, then $Y$ is absolutely continuous, and, for all $\alpha>0$, we have
$$
h_\alpha (Y)\ge h_\alpha(X),\quad
T_\alpha (Y)\ge T_\alpha(X), \quad
\Var(Y)\geq \Var(X).
$$
\item If $X\sim\text{Geom}(p)$ is a geometric random variable, and $Y$ is integer-valued, then, for all $\alpha>0$, we have
$$
H_\alpha (Y)\ge H_\alpha(X),\quad
T_\alpha (Y)\ge T_\alpha(X), \quad
\Var(Y)\geq \Var(X).
$$
\end{enumerate}
Here, $h_\alpha(\cdot)$ and $H_\alpha(\cdot)$ denote R\'enyi entropies for continuous and discrete random variables, respectively, and $T_\alpha(\cdot)$ denotes Tsallis entropy. Equality in each comparison holds if and only if $Y$ is an independent copy of $-X$. 
\end{thm}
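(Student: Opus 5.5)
Our plan is to convert the symmetrization constraint into an explicit description of the law of $Y$, and then solve an extremal problem. Take $X\sim\text{Exp}(\lambda)$ and let $Z=X+Y$, which is symmetric. Since $X$ has density $\lambda e^{-\lambda x}\mathbf 1_{x>0}$, the distribution function of $Z$ is
$$
F_Z(z)=\int_0^\infty \lambda e^{-\lambda x}F_Y(z-x)\,dx=\lambda e^{\lambda z}\int_{-\infty}^z e^{-\lambda u}F_Y(u)\,du .
$$
Hence $F_Z$ is absolutely continuous, and differentiating $e^{-\lambda z}F_Z(z)/\lambda$ gives the inversion formula
$$
F_Y=F_Z-\lambda^{-1}F_Z' .
$$
This shows that $Y$ is absolutely continuous with density $f_Y=f_Z-\lambda^{-1}f_Z'$, where $f_Z$ is an even density. (We first justify that $f_Z$ is absolutely continuous: $f_Z(z)=\lambda F_Z(z)-\lambda F_Y(z)$ a.e., and the formula for $F_Z$ shows $f_Z$ has bounded variation, which suffices.) The constraints are that $F_Y$ is nondecreasing with values in $[0,1]$. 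Monotonicity of $F_Y$ is exactly the statement that $f_Z-\lambda^{-1}f_Z'\ge0$, i.e.\ $(\log f_Z)'\le \lambda$ wherever $f_Z>0$. By evenness this also gives $(\log f_Z)'\ge-\lambda$, so $f_Z(z)\ge f_Z(0)e^{-\lambda|z|}$-type comparisons hold: $f_Z(z)/f_Z(w)\le e^{\lambda|z-w|}$. This is the hazard-rate inequality. The extremal case is equality, where $f_Z=\frac\lambda2 e^{-\lambda|z|}$ is Laplace and $f_Y=\lambda e^{\lambda y}\mathbf 1_{y<0}$, i.e.\ $Y\overset{d}{=}-X$.

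Variance is the easy part. We have $\Var(Y)=\Var(Z)-\Var(X)$. Since $\E Z^2\ge \Var(Z)$ (in fact $Z$ has mean zero), it suffices to show $\E Z^2\ge 2/\lambda^2$. The log-Lipschitz condition gives the peak bound $f_Z(0)\le \lambda/2$: indeed $1=\int f_Z\ge f_Z(0)\int e^{-\lambda|z|}dz=2f_Z(0)/\lambda$. More precisely, $f_Z(z)\ge f_Z(0)e^{-\lambda|z|}$, together with the tail comparison, makes the law of $|Z|$ dominate $\text{Exp}(\lambda)$ stochastically. To see this, set $g(t)=2f_Z(t)$ for $t\ge0$; this is a density on $[0,\infty)$ with $g'\ge-\lambda g$. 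Then $G(t)=\int_t^\infty g$ satisfies $G(t)\ge g(t)/\lambda$, i.e.\ the hazard rate is at most $\lambda$, so $G(t)\ge e^{-\lambda t}$. This gives $\E Z^2\ge 2/\lambda^2$, with equality only in the Laplace case.

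For the entropies, the cleanest route is majorization. We will show that the decreasing rearrangement $f_Y^*$ satisfies $\int_0^s f_Y^*\le 1-e^{-\lambda s}$ for all $s$. Then $\int\phi(f_Y)\ge\int\phi(f_X)$ for every convex $\phi$ with $\phi(0)=0$, and choosing $\phi(u)=u^\alpha$ (convex for $\alpha>1$, concave for $\alpha<1$ with the sign flipped) yields both the Rényi and Tsallis statements simultaneously; the case $\alpha=1$ follows by the limit or directly. The needed concentration inequality is $\P(Y\in A)\le 1-e^{-\lambda|A|}$. We plan to prove it using $f_Y\le f_Z+\lambda^{-1}|f_Z'|\le 2f_Z$ on the side where $f_Z'\le0$, and more sharply via $f_Y(y)=-\lambda^{-1}e^{\lambda y}\frac{d}{dy}\bigl(e^{-\lambda y}f_Z(y)\bigr)$. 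The latter gives $\int_A f_Y$ in terms of the decrease of $e^{-\lambda y}f_Z$. We then optimize over admissible even $f_Z$ satisfying the hazard constraint, expecting the Laplace profile to be extremal. This optimization is the main obstacle. We would first try to reduce to $A$ being a half-line or interval by a rearrangement/monotonicity argument, and then use the hazard-rate inequality from both sides, as in the variance step. Equality cases are traced back to equality in the hazard inequality, which forces $Y\overset{d}{=}-X$.

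The geometric case runs in parallel. The difference inversion $p_Y(k)=\bigl(p_Z(k)-q\,p_Z(k+1)\bigr)/p$ (with the appropriate $q=1-p$ and indexing) replaces the derivative, and the ratio bound $p_Z(k+1)/p_Z(k)\le 1/q$ replaces the log-Lipschitz condition. Here integrality of $Y$ is used so that $p_Z$ lives on $\Z$.
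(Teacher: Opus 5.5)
\textbf{Verdict: genuine gap.} Your exponential variance argument is correct and is the paper's: the hazard rate of $|Z|$ is at most $\lambda$, so $|Z|$ dominates $X$ and $\E Z^2\ge 2/\lambda^2$. The gap is the entropy part. You reduce it to the concentration bound $\P(Y\in A)\le 1-e^{-\lambda|A|}$ and then leave that bound unproved.

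\textbf{Exponential case.} The estimate $f_Y\le 2f_Z$ you mention cannot give the bound, because it double counts when $A$ meets both half-lines. For $A=[-s,s]$ it yields only $2\P(|Z|\le s)$, i.e.\ at best $2(1-e^{-\lambda s})$, which exceeds $1-e^{-2\lambda s}$. Reducing $A$ to an interval is also not automatic, since the density $\phi=2f_Z|_{[0,\infty)}$ of $|Z|$ is only constrained by $\phi'\ge-\lambda\phi$ and need not be monotone. Two ingredients are missing.
\begin{itemize}
\item[(i)] The folding identity $f_Y(z)+f_Y(-z)=2f_Z(z)$, immediate from $f_Y=f_Z+\lambda^{-1}f_Z'$ and evenness. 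It gives $\P(Y\in A)\le\int_B\phi$ with $B=(A\cap[0,\infty))\cup(-(A\cap(-\infty,0)))$ and $|B|\le|A|$.
\item[(ii)] A hazard lemma: if $\phi(t)\le\lambda\int_t^\infty\phi$ a.e., then $\int_B\phi\le 1-e^{-\lambda|B|}$ for every $B\subset[0,\infty)$. This holds because $U(t)=1-\int_{B\cap[0,t]}\phi\ge\int_t^\infty\phi\ge\phi(t)/\lambda$, which makes $e^{\lambda|B\cap[0,t]|}U(t)$ nondecreasing.
\end{itemize}
The paper's main proof even avoids majorization. By (i) and superadditivity, $f_Y(z)^\alpha+f_Y(-z)^\alpha\le\phi(z)^\alpha$ for $\alpha>1$. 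Then $\phi\le\lambda(1-F_{|Z|})$ gives $\int_0^\infty\phi^\alpha\le\int_0^\infty\phi\,(\lambda(1-F_{|Z|}))^{\alpha-1}=\lambda^{\alpha-1}/\alpha=\int f_X^\alpha$, with everything reversed for $\alpha<1$. Equality forces $\phi=\lambda(1-F_{|Z|})$, which also settles the equality cases your plan leaves open. Note too that majorization of $f_Y$ by $f_X$ gives $\int\varphi(f_Y)\le\int\varphi(f_X)$, not $\ge$.

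\textbf{Geometric case.} This is not a verbatim parallel; a second idea is needed. The folding identity does not discretize: $f_Y(k)+f_Y(-k)=\bigl(2f_Z(k)-qf_Z(k-1)-qf_Z(k+1)\bigr)/p$. Working with $|Z|$ also loses the sharp constant. Even stochastic dominance of $|Z|$ over $X$ only gives $\E Z^2\ge\E X^2=q(1+q)/p^2$, short of the needed $2\Var(X)=2q/p^2$.

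The paper instead pairs $k$ with $1-k$. From $pf_Y(k)=f_Z(k)-qf_Z(k-1)$ and evenness, $\phi(k):=f_Y(k)+f_Y(1-k)=f_Z(k)+f_Z(k-1)$ is a probability mass function on $\{1,2,\dots\}$ with $\phi(k+1)\ge q\phi(k)$, so its tails satisfy $R_{k+1}\ge qR_k$. Then $W\sim\phi$ has $W-1$ dominating $X$ and $\E[W(W-1)]=\E Z^2$, while $\E[X(X+1)]=2q/p^2$. The entropy bounds follow from monotonicity of $(1-t)^\alpha/(1-t^\alpha)$ at $t_k=R_{k+1}/R_k\in[q,1]$.

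\textbf{Smaller repairs.}
\begin{itemize}
\item Bounded variation of $f_Z$ does not make $Y$ absolutely continuous. For $Y\equiv 0$ (not a symmetrizer), $f_Z=f_X$ has bounded variation but $Y$ is not absolutely continuous. So symmetry must enter: the singular part of $Df_Z=\lambda(\mu_Y-\mu_Z)$ is $\lambda\mu_Y^{s}\ge 0$, yet it must be odd since $f_Z$ is even, hence it vanishes.
\item Your convolution formula has a sign slip. Correctly, $F_Z(z)=\lambda e^{-\lambda z}\int_{-\infty}^z e^{\lambda u}F_Y(u)\,du$, so $F_Y=F_Z+\lambda^{-1}f_Z$ and $f_Y=f_Z+\lambda^{-1}f_Z'$. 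The slip is harmless by evenness, but your version is inconsistent with the extremal $f_Y=\lambda e^{\lambda y}\mathbf{1}_{y<0}$ you state.
\end{itemize}
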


Our proofs of these comparison bounds follow from a common approach that crucially relies on special features of exponential and geometric convolutions. Let $f_Y$ and $f_Z$ denote densities or probability mass functions of $Y$ and $Z=X+Y$, respectively. In the exponential case, the convolution identity can be inverted as
$$
f_Y=f_Z+\lambda^{-1}f_Z'.
$$
The positivity of $f_Y$ imposes the following differential constraint 
$$
f_Z+\lambda^{-1}f_Z'\ge 0.
$$
This constraint can be interpreted as a
bound on the hazard rate of $|Z|$, which enables us to identify exponential distribution as the extremizer of a corresponding minimization problem. Comparison results for the geometric distribution follow from the discrete counterpart of the previous inversion formula
$$
pf_Y(k)=f_Z(k)-qf_Z(k-1).
$$
This identity converts the symmetrization condition into a discrete hazard-rate inequality. Then a tail comparison argument shows that geometric distribution is extremal among all distributions satisfying this constraint.

Furthermore, we establish the following sharp concentration bounds, which imply the majorization of $X$ over $Y$. Therefore, from a different perspective, this shows that $Y$ is more random than $X$.

\begin{thm} [Concentration bounds]
Let $Y$ be an independent symmetrizer of $X$. 
\begin{enumerate}
\item If $X\sim\text{Exp}(\lambda)$ is an exponential random variable, then for every Borel measurable set $A\subset\R$ of finite measure $|A|$, we have
$$
 \P(Y\in A)\le 1-e^{-\lambda|A|}.
$$
\item If $X\sim\text{Geom}(p)$ is a geometric random variable, and $Y$ is integer-valued, then for every finite subset $A\subset\Z$, we have
$$
 \P(Y\in A)\le 1-q^{|A|}.
$$
\end{enumerate}
Both equalities can be achieved by an independent copy of $-X$ and suitable sets $A$. 
\end{thm}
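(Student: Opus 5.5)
Throughout, write $Z=X+Y$, which is symmetric, and use the inversion formulas from the introduction.

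\textbf{Exponential case.} Since $f_Z=f_X*\mu_Y$ and $f_X$ is a density, $Z$ is absolutely continuous. Its density is locally absolutely continuous and satisfies
$$
f_Y=f_Z+\lambda^{-1}f_Z'\ge0 .
$$
The first step is to turn this into a concentration bound for $Z$. Set $g(z)=e^{\lambda z}f_Z(z)$. The constraint says exactly that $g$ is nondecreasing, so for every $t\ge0$ we have $f_Z(z+t)\ge e^{-\lambda t}f_Z(z)$.

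By symmetry, $|Z|$ has density $\phi(r)=2f_Z(r)$ on $r>0$. Symmetry also gives $f_Z(-r)=f_Z(r)$, and monotonicity of $g$ on the negative axis then forces $f_Z(r)\le e^{2\lambda r}f_Z(-r)=e^{2\lambda r}f_Z(r)$, which is trivial. The useful direction goes the other way: applying monotonicity from $-r$ up to $0$ gives $f_Z(0)\ge e^{-\lambda r}f_Z(r)$, so $f_Z$ is controlled near the origin.

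Even so, a bound on $Z$ does not immediately yield the bound on $Y$. I would therefore work directly with $f_Y$. The inversion gives $\P(Y\in A)=\int_A (f_Z+\lambda^{-1}f_Z')$, while $\sup f_Y$ is controlled as follows. Integrating $g'=\lambda e^{\lambda z}f_Y$ from $-\infty$ (where $g\to0$) gives
$$
g(z)=\lambda\int_{-\infty}^z e^{\lambda s}f_Y(s)\,ds .
$$
The cleanest route is a rearrangement argument. For fixed $|A|=a$, the goal is to maximize $\P(Y\in A)$. Since $f_Y\le \lambda\, \P(\cdot)$-type bounds alone are too weak, I would instead aim to prove the stronger statement that the decreasing rearrangement $f_Y^*$ satisfies
$$
\int_0^a f_Y^*\le 1-e^{-\lambda a}\quad\text{for all }a,
$$
i.e.\ that $Y$ is majorized by $\mathrm{Exp}(\lambda)$.

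To get this, first establish the pointwise tail bound
$$
|\{f_Y>s\}|\le \lambda^{-1}\log(\lambda/s)\quad\text{for } s<\lambda .
$$
Combined with $\int f_Y=1$, this gives the claim by a layer-cake computation, because the exponential density has exactly these level sets. To prove the level-set bound, use the symmetry of $Z$. Since $f_Z(z)=f_Z(-z)$, we have
$$
f_Y(z)+f_Y(-z)=f_Z(z)+f_Z(-z)+\lambda^{-1}\bigl(f_Z'(z)+f_Z'(-z)\bigr)=2f_Z(z),
$$
because $f_Z'$ is odd. Hence $f_Y(z)+f_Y(-z)=2f_Z(z)$. Combined with the hazard constraint, which says $f_Z(r)\ge e^{-\lambda(r-u)}f_Z(u)$ for $r\ge u$, this shows $f_Z$ cannot decay faster than $e^{-\lambda r}$. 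The level sets of $f_Z$ therefore form an interval around a region whose measure is bounded by comparison with $\tfrac{\lambda}{2}e^{-\lambda|z|}$. From there, I would bound $\{f_Y>s\}\subset\{z: 2f_Z(|z|)>s\}$ and use
$$
\int f_Z=1,\qquad f_Z(r)\ge e^{-\lambda(r-u)}f_Z(u).
$$
These give $f_Z(u)\cdot\lambda^{-1}\cdot 2\le 1$ and a level-set estimate for $f_Z$ on each half-line.

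\textbf{Main obstacle.} The hard step is passing from the clean hazard-rate control of $Z$ to a sharp rearrangement bound for $Y$. The identity $f_Y(z)+f_Y(-z)=2f_Z(z)$ loses a factor in how mass of $Y$ is split between $z$ and $-z$. I would try to handle this by a tail-comparison argument on $\P(Y\in A)$ directly. For sets $A\subset(-\infty,c]$-type halves, one has the exact formula
$$
\P(Y\le c)=F_Z(c)+\lambda^{-1}f_Z(c),
$$
and extremality can then be reduced to sets that are intervals $[c-a,c]$. For these,
$$
\P(Y\in[c-a,c])=F_Z(c)-F_Z(c-a)+\lambda^{-1}\bigl(f_Z(c)-f_Z(c-a)\bigr),
$$
which I would bound by $1-e^{-\lambda a}$ using monotonicity of $g$, the symmetry of $Z$, and optimizing over $c$. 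Equality holds for $Y=-X$ with $A=[-a,0]$.

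\textbf{Geometric case.} The discrete analogue runs verbatim. The inversion $pf_Y(k)=f_Z(k)-qf_Z(k-1)$ shows that $q^{-k}f_Z(k)$ is nondecreasing, and the interval computation becomes a finite telescoping sum giving the bound $1-q^{|A|}$. Equality holds for $Y=-X$ with $A=\{-|A|+1,\dots,0\}$.
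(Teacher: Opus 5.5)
There is a genuine gap: neither of your two routes reaches the bound.

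\textbf{The level-set estimate is false.} Your main route rests on $|\{f_Y>s\}|\le\lambda^{-1}\log(\lambda/s)$, and this holds only when $Y\overset{d}{=}-X$. Since $|\{f_X>s\}|=\lambda^{-1}\log(\lambda/s)$, the estimate says $f_Y^*\le f_X^*$ pointwise. Both integrate to $1$, so this forces $f_Y^*=f_X^*$, hence $h_\alpha(Y)=h_\alpha(X)$, and by the equality case of Theorem~\ref{thm:symm-exp-ent}, $Y\overset{d}{=}-X$.

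For a concrete counterexample, take $\lambda=1$ and $Y=W-X'$, with $X'$ an independent copy of $X$ and $W$ uniform on $[-L,L]$, independent of both. Then $X+Y=(X-X')+W$ is symmetric. But $f_Y(y)=\frac{1}{2L}(1-e^{y-L})$ on $[-L,L]$, so $|\{f_Y>s\}|\ge 2L-1$ at $s=\frac{1-e^{-1}}{4L}$, which is far larger than $\log(1/s)$ for large $L$.

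The justification you sketch also points the wrong way. The bound $f_Z(r)\ge e^{-\lambda(r-u)}f_Z(u)$ is a \emph{lower} bound on $f_Z$, which enlarges level sets; together with $\int f_Z=1$ it yields only $\|f_Z\|_\infty\le\lambda/2$. Moreover, the inclusion $\{f_Y>s\}\subset\{2f_Z(|\cdot|)>s\}$ counts both $z$ and $-z$, so it loses a factor $2$ in measure even when $Z$ is Laplace.

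\textbf{The interval reduction is unjustified.} Your fallback ``reduce to intervals $[c-a,c]$'' is asserted without argument, and the interval estimate itself is only announced. For a fixed $Y$ the reduction is simply false. The maximizers of $\P(Y\in A)$ at fixed $|A|$ are superlevel sets of $f_Y$, and these are disconnected when, e.g., $W=\pm c$ with probability $\tfrac12$ each and $c$ is large. Your ``verbatim'' geometric case inherits both problems.

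\textbf{The missing idea: fold $A$ onto a half-line.} Instead of comparing level sets on the whole line, split $A=A_+\cup A_-$ and use the identity $f_Y(z)+f_Y(-z)=2f_Z(z)$ that you derived. This gives $\P(Y\in A)\le\int_B\phi$, where $B=A_+\cup(-A_-)\subset[0,\infty)$ has $|B|\le|A|$ and $\phi=2f_Z$ is the density of $|Z|$. The fold costs nothing in measure, and it is tight for $Y=-X$, $A=[-a,0]$, so the worry in your ``main obstacle'' paragraph is unfounded.

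Integrating your own bound $f_Z(z+t)\ge e^{-\lambda t}f_Z(z)$ over $t\ge0$ gives the hazard-rate bound $\phi(t)\le\lambda\int_t^\infty\phi$. What is then needed is a concentration inequality for arbitrary Borel $B$ under this hazard bound, not a level-set bound. Set $U(t)=1-\int_0^t\mathbf 1_B\phi$ and $L(t)=|B\cap[0,t]|$. Then $U(t)\ge\int_t^\infty\phi\ge\phi(t)/\lambda$, so $\frac{d}{dt}\bigl(e^{\lambda L}U\bigr)=e^{\lambda L}\mathbf 1_B(\lambda U-\phi)\ge0$. Letting $t\to\infty$ gives $\int_B\phi\le 1-e^{-\lambda|B|}\le1-e^{-\lambda|A|}$.

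\textbf{Geometric case.} Here the fold must be $k\leftrightarrow 1-k$, not $k\leftrightarrow -k$. With this pairing the inversion formula gives $\phi(k)=f_Y(k)+f_Y(1-k)=f_Z(k)+f_Z(k-1)$, a pmf on $\Z_{\ge1}$ with $\phi(k+1)\ge q\phi(k)$, hence $\phi(k)\le pR_k$. By contrast, $k\leftrightarrow-k$ leaves $k=0$ unpaired and gives no such clean expression. Order $B=A_+\cup(1-A_-)$ as $b_1<\dots<b_m$ and set $U_j=1-\sum_{i\le j}\phi(b_i)$. Since $R_{b_j}\le U_{j-1}$, you get $U_j\ge qU_{j-1}$, which yields $1-q^{|B|}\le1-q^{|A|}$.
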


\section{Exponential symmetrization}

\begin{defn}[Continuous R\'enyi and Tsallis entropies]
Let $X$ be a real-valued random variable with density $f$ with respect to the Lebesgue measure. For $\alpha\in (0, 1)\cup (1, \infty)$, the R\'enyi entropy $h_\alpha(X)$ of order $\alpha$ is defined by
$$
h_\alpha(X) = \frac{1}{1-\alpha}\log \int_{\mathbb{R}}f(x)^\alpha dx
$$
and the Tsallis entropy $T_\alpha(X)$ of order $\alpha$ is defined by
$$
T_\alpha(X) = \frac{1}{1-\alpha}\left(\int_{\mathbb{R}}f(x)^\alpha dx-1\right).
$$
Taking the limit $\alpha\to 1$, both entropies are reduced to the Boltzman-Shannon entropy 
$$
h(X)=-\int_{\mathbb{R}}f(x)\log f(x)dx.
$$
\end{defn}

\begin{rmk}\label{rmk:scaling}
One can check the following scaling properties of R\'enyi and Tsallis entropies for real-valued random variables. For $\lambda\ne 0$, we have
\begin{align*}
h_\alpha(\lambda X) &= h_\alpha(X)+\log |\lambda|,\\
T_\alpha(\lambda X) &= |\lambda|^{1-\alpha}T_\alpha(X)+\frac{|\lambda|^{1-\alpha}-1}{1-\alpha}.
\end{align*}
\end{rmk}

\begin{prop} [Inversion formula] \label{prop:inversion}
Let $\lambda>0$. Let $X\sim \text{Exp}(\lambda)$ be an exponential random variable with density $f_X(x)=\lambda e^{-\lambda x}\mathbbm{1}_{\mathbb{R}_+}(x)$. Let $Y$ be an independent symmetrizer of $X$. Write $Z=X+Y$. Then $Y$ is absolutely continuous with respect to the Lebesgue measure. Furthermore, for almost all $z\in\R$, we have
\begin{equation}\label{eq:pdf-y}
f_Y(z)=f_Z(z)+\lambda^{-1}f_Z'(z),
\end{equation}
where $f_Y$ and $f_Z$ are densities of $Y$ and $Z$, respectively.
\end{prop}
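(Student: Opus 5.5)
The plan has three steps. First, $Z$ is absolutely continuous with a locally absolutely continuous density. Second, the density of $Z$ satisfies a first-order ODE that can be solved for $Y$. Third, the resulting $Y$ is absolutely continuous and satisfies \eqref{eq:pdf-y}.

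\emph{Step 1: the density of $Z$.} Since $X$ has a density and is independent of $Y$, the sum $Z=X+Y$ is absolutely continuous for any law $\mu_Y$ of $Y$. Its density is
$$
f_Z(z)=\int_{\R}\lambda e^{-\lambda(z-y)}\mathbbm{1}_{\{y\le z\}}\,d\mu_Y(y)=\lambda e^{-\lambda z}\int_{(-\infty,z]}e^{\lambda y}\,d\mu_Y(y).
$$
Write $G(z)=\int_{(-\infty,z]}e^{\lambda y}d\mu_Y(y)$; this is a nondecreasing right-continuous function. We first check that $G(z)<\infty$ for every $z$, which holds because $f_Z$ is integrable and so finite almost everywhere, and $G$ is monotone. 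Then $e^{\lambda z}f_Z(z)=\lambda G(z)$, so the right-continuous version of $f_Z$ is of locally bounded variation. As measures,
$$
d\bigl(e^{\lambda z}f_Z\bigr)=\lambda e^{\lambda z}\,d\mu_Y .
$$
Equivalently, in the distributional sense, $f_Z+\lambda^{-1}f_Z'=\mu_Y$. This identity holds for every $Y$; symmetry has not been used yet.

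\emph{Step 2: symmetry makes $f_Z$ continuous.} The key point is that the symmetry of $Z$ forces $\mu_Y$ to have no singular part. Symmetry gives $f_Z(z)=f_Z(-z)$ almost everywhere. From the formula above, $f_Z$ is nonincreasing-type on the left: it has only upward jumps, located at the atoms of $\mu_Y$, and its singular continuous part is increasing. Reflecting, the function $f_Z(-z)$ has only downward jumps and a decreasing singular part. Since the two functions agree almost everywhere, their distributional derivatives coincide. Comparing singular parts (the absolutely continuous terms $-\lambda f_Z$ being harmless), the singular part of $\mu_Y$ at $z$ must equal minus the reflected singular part of $\mu_Y$ at $-z$. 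One is a nonnegative measure and the other is a nonpositive measure, so both vanish. Hence $\mu_Y$ has no atoms and no singular continuous part, that is, $Y$ is absolutely continuous.

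\emph{Step 3: the inversion formula.} With $\mu_Y=f_Y\,dz$, the function $e^{\lambda z}f_Z(z)=\lambda\int_{-\infty}^z e^{\lambda y}f_Y(y)\,dy$ is locally absolutely continuous. Differentiating almost everywhere gives
$$
\lambda f_Z+f_Z'=\lambda f_Y,
$$
which is \eqref{eq:pdf-y}.

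The main obstacle is the bookkeeping in Step 2. We have to work with a good representative of $f_Z$ and justify that two BV functions equal almost everywhere have equal distributional derivatives. We then have to apply the Lebesgue decomposition carefully to the reflected measure. An alternative that avoids distributions is Fourier analysis. Writing $\varphi_Y=\varphi_Z(1-it/\lambda)$, with $\varphi_Z$ real, the identity $\varphi_Y(-t)=\overline{\varphi_Y(t)}$ is automatic and gives nothing new, so we would instead use the measure-theoretic argument above.
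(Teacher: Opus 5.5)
Your proposal is correct and follows essentially the paper's route: the distributional identity $\mu_Y=\mu_Z+\lambda^{-1}D\mu_Z$, then symmetry of $f_Z$ giving $\mu_Y+\mu_{-Y}=2\mu_Z$ (which is what your comparison of singular parts amounts to), hence absolute continuity of $Y$. Your Step 3 also justifies $D\mu_Z=f_Z'\,dz$ through the local absolute continuity of $e^{\lambda z}f_Z$, which is more careful than the paper's bare appeal to a.e.\ differentiability; and your closing aside undersells the Fourier route, since $\varphi_Z$ is real and even, so $\varphi_Y(t)+\varphi_Y(-t)=\varphi_Z(t)\bigl[(1-it/\lambda)+(1+it/\lambda)\bigr]=2\varphi_Z(t)$, which is exactly $\mu_Y+\mu_{-Y}=2\mu_Z$ in one line.
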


\begin{proof}
Let $\mu_X, \mu_Y, \mu_{-Y}, \mu_Z$ denote the distributions of $X, Y, -Y, Z$, respectively. One can check that the distributional derivative of $\mu_X$ satisfies
$$
D\mu_X=\lambda\delta_0-\lambda\mu_X,
$$
where $\delta_0$ is the Dirac measure. Therefore, we have
$$
D\mu_Z=D(\mu_X\ast\mu_Y)=(D\mu_X)\ast\mu_Y=\lambda \mu_Y-\lambda\mu_Z,
$$
which gives
\begin{equation}\label{eq:inversion}
\mu_Y=\mu_Z+\lambda^{-1}D\mu_Z.
\end{equation}
Since $Z$ is symmetric, the signed measure $D\mu_Z$ satisfies $(D\mu_Z)(-A)=-(D\mu_Z)(A)$ for all Borel measurable sets $A\subset \R$. Then we obtain from identity \eqref{eq:inversion} that
$$
\mu_{-Y}=\mu_Z-\lambda^{-1}D\mu_Z.
$$
This, combined with \eqref{eq:inversion}, gives
$$
\mu_Y+\mu_{-Y}=2\mu_Z.
$$
Since $\mu_{-Y}$ is a positive measure, we obtain $\mu_Y\ll \mu_Z$. Clearly, $Z$ is absolutely continuous, and hence, $Y$ is also absolutely continuous. By Lebesgue's differentiation theorem, we know that $f_Z$ is differentiable almost everywhere. Hence, we have $D\mu_Z=f_Z'(z)dz$. Combined with \eqref{eq:inversion}, we obtain the inversion formula \eqref{eq:pdf-y}.
\end{proof}

\begin{coro}
Under notations of Proposition \ref{prop:inversion}, we have 
\begin{equation}\label{eq:maximum}
\|f_Y\|_\infty\le 2\|f_Z\|_\infty\le \lambda.
\end{equation}
Furthermore, equation $\|f_Y\|_\infty=\lambda$ holds if and only if $Y$ is an independent copy of $-X$.
\end{coro}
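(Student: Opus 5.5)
The first inequality follows from the identity $\mu_Y+\mu_{-Y}=2\mu_Z$ obtained in the proof of Proposition \ref{prop:inversion}. In density form it reads $f_Y(z)+f_Y(-z)=2f_Z(z)$ for almost all $z$. Since $f_Y(-z)\ge 0$, this gives $f_Y(z)\le 2f_Z(z)$ a.e., and hence $\|f_Y\|_\infty\le 2\|f_Z\|_\infty$.

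For the second inequality I will bound $f_Z$ directly from the convolution $f_Z(z)=\int f_X(z-y)\,d\mu_Y(y)=\lambda\E\bigl[e^{-\lambda(z-Y)}\mathbbm{1}_{\{Y\le z\}}\bigr]$. This is at most $\lambda$, which only gives $\|f_Z\|_\infty\le\lambda$, so symmetry must be used. Write $g(z)=\E[e^{-\lambda(z-Y)}\mathbbm{1}_{\{Y\le z\}}]$. Then $f_Z=\lambda g$, and $g(z)\le \P(Y\le z)$. By symmetry, $f_Z(z)=f_Z(-z)\le\lambda\P(Y\le -z)$. It is cleaner to argue via the version of $f_Z$ given by the formula above. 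It is càdlàg, and $f_Z(z)=\lambda\P(Z\ge z)$-type bounds follow from the inversion formula. Integrating \eqref{eq:pdf-y} over $(z,\infty)$ gives $\P(Y>z)=\P(Z>z)-\lambda^{-1}f_Z(z)$. Positivity then yields $f_Z(z)\le\lambda\P(Z>z)$. Combined with symmetry, for $z\ge0$ we get $f_Z(z)\le\lambda\P(Z>z)\le\lambda/2$, since $\P(Z>z)\le\P(Z>0)\le 1/2$. For $z<0$ the same bound follows from $f_Z(z)=f_Z(-z)$. This gives $2\|f_Z\|_\infty\le\lambda$. I need to justify the boundary term $f_Z(t)\to0$ as $t\to\infty$ when integrating. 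This holds because $f_Z$ has bounded variation (by \eqref{eq:inversion}, $D\mu_Z$ is a finite measure) and is integrable.

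For the equality case, suppose $\|f_Y\|_\infty=\lambda$. Then every inequality in the chain is tight at (essentially) a sequence of points. Tightness in $f_Z(z)\le\lambda\P(Z>z)\le\lambda/2$ forces $z\to0^+$ and $\P(Y>z)\to0$. Thus $Y\le 0$ a.s. Tightness in $f_Y\le 2f_Z$ is consistent with $f_Y(-z)=0$ for $z<0$. Once $Y\le0$ a.s., the relation $f_Y(z)+f_Y(-z)=2f_Z(z)$ gives $f_Y(z)=2f_Z(z)$ for $z<0$ and $f_Y(-z)=2f_Z(z)$ for $z>0$. Plugging into \eqref{eq:pdf-y} on $z>0$, where $f_Y(z)=0$, gives $f_Z+\lambda^{-1}f_Z'=0$. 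Hence $f_Z(z)=Ce^{-\lambda z}$ on $z>0$, and $\P(Z>0)=1/2$ forces $C=\lambda/2$. So $Z$ is Laplace, and $f_Y(-z)=\lambda e^{-\lambda z}$ on $z>0$, i.e.\ $Y\overset{d}{=}-X$. Conversely, $-X$ has sup density $\lambda$.

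The main obstacle is making the equality case rigorous, since the sup need not be attained. I will handle it by showing directly that $\P(Y>0)>0$ implies a strict bound. In that case $\P(Z>z)\le\P(Z>0)-$ something, or I can use the bound $f_Y(z)\le 2f_Z(z)\le 2\lambda\P(Z>|z|)$. Near $z=0$ this is $\le\lambda$ with strictness unless $\P(Y>0)=0$, because $\P(Y>z)>0$ for small $z>0$ makes $f_Z(z)\le\lambda(\P(Z>z)-\P(Y>z))$. I expect the careful version of this to require a short case analysis on the location of near-maximizers.
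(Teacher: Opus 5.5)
Your proof is correct in substance. The first inequality, $f_Y\le 2f_Z$ from $\mu_Y+\mu_{-Y}=2\mu_Z$, is the same as the paper's; the second inequality and the equality case take a different route.

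The paper uses positivity of both $f_Y(z)$ and $f_Y(-z)$ to get the two-sided inequality $|f_Z'|\le\lambda f_Z$. It then applies Gronwall to obtain $f_Z(z)\ge f_Z(z_0)e^{-\lambda|z-z_0|}$ around an arbitrary point, and integrates. For equality it takes a maximizer $z_0$, so this bound becomes an identity, $Z$ is Laplace centered at $z_0$, and symmetry forces $z_0=0$.

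You instead use the one-sided identity $\P(Z>z)=\P(Y>z)+\lambda^{-1}f_Z(z)$. This is just conditioning on $Y$ plus memorylessness of $X$, so your boundary-term discussion is unnecessary. It gives the hazard bound $f_Z(z)\le\lambda\P(Z>z)$, and you use symmetry only through $\P(Z>0)\le 1/2$. This is exactly the hazard-rate bound \eqref{eq:phi-bound} that the paper uses later for the entropy comparison.

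What your route buys:
\begin{itemize}
\item It ties the corollary to the main argument and avoids distributional derivatives and Gronwall.
\item It exposes the structure of the extremal case: equality forces $\P(Y>0)=0$, and the only symmetrizer supported on $(-\infty,0]$ is $-X$. Your ODE step on $(0,\infty)$ is valid because $f_Z$ is locally absolutely continuous.
\end{itemize}
The paper's route, in turn, gives a pointwise exponential lower bound on $f_Z$ around every point, which for instance shows immediately that $f_Z>0$ everywhere.

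One step is left unjustified. The claim that tightness forces $z\to0^+$ needs $\P(0<Z\le\varepsilon)>0$ for every $\varepsilon>0$; otherwise near-maximizers could accumulate at some $z^*>0$. You can supply it as follows. We have $\P(Y\le 0)>0$, since otherwise $Z>0$ a.s., contradicting symmetry. Hence for $z\ge0$,
$$f_Z(z)\ge\lambda e^{-\lambda z}\E\bigl[e^{\lambda Y}\mathbbm{1}_{\{Y\le 0\}}\bigr]>0.$$

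Your concern that the supremum need not be attained does not apply to $f_Z$. Its continuous version vanishes at $\pm\infty$, so $\lambda/2=f_Z(z_0)$ for some $z_0\ge0$ (using symmetry). Then
$$\lambda/2=\lambda\bigl(\P(Z>z_0)-\P(Y>z_0)\bigr)\le\lambda\P(Z>z_0)\le\lambda/2,$$
which gives $z_0=0$ and $\P(Y>0)=0$ at once. No case analysis on near-maximizers is needed.
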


\begin{proof}
Since $Z$ is symmetric, we have $f_Z'(-z)=-f_Z'(z)$ for almost all $z\in\R$. By inversion formula \eqref{eq:inversion}, we obtain 
$$
f_Y(-z)=f_Z(z)-\lambda^{-1}f_Z'(z).
$$
Combined with \eqref{eq:inversion} and the positivity of $f_Y$, this implies for almost all $z\in\R$ that
\begin{equation}\label{eq:diff-ineq}
|f_Z'(z)|\le \lambda f_Z(z).
\end{equation}
Using the inversion formula \eqref{eq:inversion} again, we obtain for almost all $z\in\R$ that
$$
f_Y(z)\le 2f_Z(z).
$$
This proves the first part of \eqref{eq:maximum}. We apply Gronwall's inequality to the differential inequality \eqref{eq:diff-ineq} and obtain for all $z_0, z\in \R$ that
\begin{equation}\label{eq:gronwall}
f_Z(z)\ge f(z_0)e^{-\lambda|z-z_0|}.
\end{equation}
Integrate both sides over $z$ to obtain
$$
1=\int_{\R}f_Z(z)dz\ge \int_{\R}f(z_0)e^{-\lambda|z-z_0|}dz=2\lambda^{-1}f(z_0).
$$
Since $z_0$ is arbitrary, this proves the second part of \eqref{eq:maximum}.

\textbf{Equality case}. By \eqref{eq:maximum}, equation $\|f_Y\|_\infty=\lambda$ gives $\|f_Z\|_\infty=\lambda/2$. Continuity and decay of $f_Z$ imply that $f(z_0)=\lambda/2$ for some $z_0\in\R$. Then equality in \eqref{eq:gronwall} has to hold. Symmetry of $Z$ forces $z_0=0$. Then we can conclude from the inversion formula \eqref{eq:inversion} that $Y\overset{d}{=}-X$.
\end{proof}

\begin{thm}\label{thm:symm-exp-ent}
Let $\lambda>0$. Let $X\sim \text{Exp}(\lambda)$ be an exponential random variable with the probability density function $f_X(x)=\lambda e^{-\lambda x}\mathbbm{1}_{\mathbb{R}_+}(x)$. Let $Y$ be an independent symmetrizer of $X$. For all $\alpha>0$, we have
\begin{equation}\label{eq:exp-renyi-y-x}
h_\alpha (Y)\ge h_\alpha(X)
\end{equation}
and
\begin{equation}\label{eq:exp-tsallis-y-x}
T_\alpha (Y)\ge T_\alpha(X).
\end{equation}
Equality holds if and only if $Y$ is an independent copy of $-X$.
\end{thm}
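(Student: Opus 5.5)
We plan to derive both \eqref{eq:exp-renyi-y-x} and \eqref{eq:exp-tsallis-y-x} from a single majorization statement. Since $h_\alpha$ and $T_\alpha$ are both monotone functions of $I_\alpha(f)=\int_\R f^\alpha$, with the sign of $1/(1-\alpha)$ flipping at $\alpha=1$, it suffices to show
$$
\int_\R f_Y^\alpha\le \int_\R f_X^\alpha \quad (\alpha>1), \qquad \int_\R f_Y^\alpha\ge \int_\R f_X^\alpha \quad (0<\alpha<1).
$$
The case $\alpha=1$ then follows by taking limits, or directly by the same argument with $\varphi(u)=u\log u$. All of this follows from the Hardy--Littlewood--P\'olya/Karamata inequality for the convex or concave function $\varphi(u)=u^\alpha$, provided we prove that $f_X$ majorizes $f_Y$ in the sense of decreasing rearrangements:
$$
\int_0^t f_Y^*(s)\,ds\le \int_0^t \lambda e^{-\lambda s}\,ds=1-e^{-\lambda t}\qquad\text{for all } t>0 .
$$
Since both functions have total integral $1$, the usual continuous majorization theorem applies, and the exponential side is already decreasing. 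Equivalently, we need $\P(Y\in A)\le 1-e^{-\lambda|A|}$ for every Borel $A$ of finite measure, which is exactly the concentration bound announced in the introduction. So the core of the proof is that concentration inequality.

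To prove $\P(Y\in A)\le 1-e^{-\lambda|A|}$, we rewrite the inversion formula \eqref{eq:pdf-y} as
$$
f_Y(z)=\lambda^{-1}e^{-\lambda z}\frac{d}{dz}\bigl(e^{\lambda z}f_Z(z)\bigr).
$$
By \eqref{eq:diff-ineq}, both $e^{\lambda z}f_Z(z)$ is nondecreasing and $e^{-\lambda z}f_Z(z)$ is nonincreasing. Thus $\lambda^{-1}f_Z'/f_Z\in[-1,1]$ a.e., which is a two-sided hazard-rate constraint on $|Z|$. The plan is a bathtub/extremal argument. Among sets $A$ of measure $t$, we bound $\int_A f_Y\le \int_A 2f_Z$ by $2\int_A f_Z$. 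Then, using symmetry of $f_Z$ together with the Gronwall comparison \eqref{eq:gronwall}, we reduce to the case in which $A$ is a superlevel set of $f_Z$, i.e. a symmetric interval $[-s,s]$ shifted appropriately. We also compare against the complementary tail identity $\P(Y\notin A)=\int_{A^c}(f_Z+\lambda^{-1}f_Z')$. The inequality $e^{-\lambda z}f_Z$ nonincreasing should give $\P(Y\notin A)\ge e^{-\lambda|A|}$, by showing that the mass $f_Z+\lambda^{-1}f_Z'$ left outside any set of length $t$ dominates that of the extremal profile $f_Z=\tfrac{\lambda}{2}e^{-\lambda|z|}$, for which $f_Y=\lambda e^{\lambda z}\mathbbm 1_{z<0}$ and $A=[-t,0]$.

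I expect this concentration step to be the main obstacle. The crude bound $f_Y\le 2f_Z$ alone is too lossy for large $|A|$, so the argument must exploit the exact identity $f_Y(z)+f_Y(-z)=2f_Z(z)$ together with the one-sided monotonicity of $e^{\pm\lambda z}f_Z$. First I would try to compute $\sup_A\bigl(\P(Y\in A)-u|A|\bigr)=\int(f_Y-u)_+$ directly. I would compare it with $\int(f_X-u)_+=1-\tfrac u\lambda-\tfrac u\lambda\log\tfrac\lambda u$ by integrating the hazard inequality over the level set $\{f_Y>u\}$, whose measure should be controlled by $\lambda^{-1}\log(\lambda/u)$ via \eqref{eq:gronwall} and the bound \eqref{eq:maximum}.

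For the equality case, the function $u\mapsto u^\alpha$ is strictly convex or strictly concave for $\alpha\ne1$, and $u\log u$ is strictly convex. Hence equality in Karamata forces $f_Y^*=f_X^*$, in particular $\|f_Y\|_\infty=\lambda$. The preceding corollary then yields $Y\overset{d}{=}-X$. The converse is immediate, since $-X$ has the same rearrangement as $X$.
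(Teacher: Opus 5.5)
\textbf{The gap is in the concentration step.} Your reduction is sound. Since $\int_0^t f_Y^*=\sup_{|A|=t}\P(Y\in A)$, the bound $\P(Y\in A)\le 1-e^{-\lambda|A|}$ is exactly majorization of $f_Y$ by $f_X$. Karamata with strict convexity then forces $f_Y^*=f_X^*$ in the equality case, hence $\|f_Y\|_\infty=\lambda$, and the corollary finishes; this is the route recorded in Remark~\ref{rmk:concentration}. But the concentration inequality carries all the content, it is never proved, and none of your sketches can prove it:
\begin{itemize}
\item Bounding $\int_A f_Y\le 2\int_A f_Z$ and optimizing over superlevel sets of $f_Z$ is lossy for every $t>0$, not only for large $t$. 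Those sets are symmetric, and need not be intervals since $f_Z$ need not be unimodal. Already for $f_Z=\frac{\lambda}{2}e^{-\lambda|z|}$ one gets $2\int_{-t/2}^{t/2}f_Z=2(1-e^{-\lambda t/2})$, which exceeds $1-e^{-\lambda t}$ by $(1-e^{-\lambda t/2})^2$.
\item The complement argument only restates the goal.
\item The level-set bound $|\{f_Y>u\}|\le\lambda^{-1}\log(\lambda/u)$ is false. Take $Y=-X'+S$ with $X'$ an independent copy of $X$ and $S$ uniform on $[-M,M]$, which is a legitimate symmetrizer. Then $f_Y(y)=\frac{1}{2M}\bigl(1-e^{-\lambda(M-y)}\bigr)$ on $[-M,M]$, so $\{f_Y>1/(8M)\}$ has measure at least $2M-\lambda^{-1}\log 2$. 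For large $M$ this far exceeds $\lambda^{-1}\log(8\lambda M)$. Note also that \eqref{eq:gronwall} bounds $f_Z$ from below, so it cannot confine level sets from above.
\end{itemize}

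\textbf{The missing idea is to fold $A$ onto $[0,\infty)$ before using any hazard information.} Set $A_+=A\cap[0,\infty)$, $A_-=A\cap(-\infty,0)$ and $B=A_+\cup(-A_-)$. The identity $f_Y(z)+f_Y(-z)=2f_Z(z)$ gives $\P(Y\in A)\le\int_B\phi$, where $\phi=2f_Z$ on $[0,\infty)$ is the density of $|Z|$ and $|B|\le|A|$. From here only the one-sided constraint $\phi'\ge-\lambda\phi$ is needed. It integrates to the hazard bound $\phi(t)\le\lambda\int_t^\infty\phi$.

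A Gronwall argument along $B$ then finishes (Lemma~\ref{lem:hazard ineq}). Let $U(t)=1-\int_0^t\mathbf 1_B\phi$, which satisfies $U(t)\ge\int_t^\infty\phi$, and let $L(t)=|B\cap[0,t]|$. Then $\frac{d}{dt}\bigl(e^{\lambda L(t)}U(t)\bigr)=e^{\lambda L(t)}\mathbf 1_B(t)\bigl(\lambda U(t)-\phi(t)\bigr)\ge0$, whence $1-\int_B\phi\ge e^{-\lambda|B|}$.

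\textbf{Comparison with the paper's proof.} The paper's proof of this theorem bypasses concentration altogether. Normalize $\lambda=1$ and write $u=f_Z'/f_Z$, which is odd with $|u|\le1$. Comparing $(1+u)^\alpha+(1-u)^\alpha$ pointwise with $2^\alpha$ reduces $\int f_Y^\alpha$ to $\int_0^\infty\phi^\alpha$. The hazard bound $\phi\le 1-F_{|Z|}$, together with $\int_0^\infty\phi\,(1-F_{|Z|})^{\alpha-1}=1/\alpha=\int f_X^\alpha$, closes the argument. Your route, once completed, buys more: majorization gives the comparison for every convex $\varphi$, not just the R\'enyi and Tsallis functionals.
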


\begin{proof}
By the scaling property in Remark \ref{rmk:scaling}, we can assume that $\lambda=1$. We first prove the statement for the case $\alpha\neq 1$; the case $\alpha=1$ follows from a similar argument. 

\textbf{Case 1: $\alpha\ne 1$}. We denote by $f_Y$ the probability density function of $Y$. 
By definitions of R\'enyi entropy and Tsallis entropy, both \eqref{eq:exp-renyi-y-x} and \eqref{eq:exp-tsallis-y-x} for $\alpha\neq 1$ are equivalent to
\begin{align}
\int_{\mathbb{R}}f_Y(z)^\alpha dz &\ge \int_0^\infty f_X(z)^\alpha dz\quad \text{if}~0<\alpha<1, \label{eq:exp-y-x-1}\\
\int_{\mathbb{R}}f_Y(z)^\alpha dz &\le \int_0^\infty f_X(z)^\alpha dz\quad \text{if}~\alpha>1. \label{eq:exp-y-x-2}
\end{align}
The $\lambda=1$ case of the inversion formula \eqref{eq:pdf-y} gives
\begin{equation}\label{eq:pdf-y-a}
f_Y(z)=f_Z(z)+f_Z'(z).
\end{equation}
Write $u(z)=f_Z'(z)/f_Z(z)$ for $z\in\mathbb{R}$. (We have $f_Z(z)>0$ for all $z\in\mathbb{R}$. To see this, observe that the symmetry of $Z$ implies that $\mathbb{P}(Y\le 0)>0$. This fact and the following convolution formula 
$$
f_Z(z)=\int_{-\infty}^z f_X(z-y)f_Y(y)dy=e^{-z}\int_{-\infty}^z f_Y(y)e^ydy.
$$
yield that $f_Z(z)>0$ for all $z\ge 0$. Using the symmetry of $Z$ again, we obtain the positivity of $f_Z$ on the whole real line.) Since $f_Z$ is symmetric, $u(z)$ is an odd function and we have
\begin{align}\label{eq:ent-y}
\int_{\mathbb{R}}f_Y(z)^\alpha d x &=\int_{\mathbb{R}}f_Z(z)^\alpha (1+u(z))^\alpha dz \notag\\
&=\int_0^\infty f_Z(z)^\alpha\left((1+u(z))^\alpha+(1-u(z))^\alpha
\right)dz.
\end{align}
The non-negativity of $f_Y$ and equation \eqref{eq:pdf-y-a} imply that $|u(z)|\le 1$ for almost all $z\in\mathbb{R}$. For all $|t|\le 1$, one can check that
\begin{align*}
(1+t)^\alpha+(1-t)^\alpha & \ge 2^\alpha\quad \text{if}~0<\alpha<1,\\
(1+t)^\alpha+(1-t)^\alpha & \le 2^\alpha\quad \text{if}~\alpha>1.
\end{align*}
Combining this and equation \eqref{eq:ent-y}, we obtain 
\begin{align}
\int_{\mathbb{R}}f_Y(z)^\alpha dz &\ge \int_0^\infty (2f_Z(z))^\alpha dz \quad \text{if}~0<\alpha<1, \label{eq:exp-fy-phi-1}\\
\int_{\mathbb{R}}f_Y(z)^\alpha dz &\le \int_0^\infty (2f_Z(z))^\alpha dz \quad \text{if}~\alpha>1. \label{eq:exp-fy-phi-2}
\end{align}
Write $\phi(z)=2f_Z(z)$ for $z\ge 0$, which is the density of $|Z|$. Again, using the non-negativity of $f_Y$ and equation \eqref{eq:pdf-y-a}, we have for  almost all $z\ge 0$ that
$$
\phi(z)+\phi'(z)\ge 0
$$
which implies that
\begin{align}\label{eq:phi-bound}
\phi(z)=-\int_z^\infty \phi'(x)dx\le \int_z^\infty \phi(x)dx=1-F_{|Z|}(z),
\end{align}
where $F_{|Z|}$ is the cumulative distribution function of $|Z|$. Therefore we have
\begin{align}
\int_0^\infty \phi(z)^\alpha dz &\ge \int_0^\infty \phi(z)(1-F_{|Z|}(z))^{\alpha-1} dz \quad \text{if}~0<\alpha<1, \label{eq:exp-phi-fx-1}\\
\int_0^\infty \phi(z)^\alpha dz &\le \int_0^\infty \phi(z)(1-F_{|Z|}(z))^{\alpha-1} dz \quad \text{if}~\alpha>1. \label{eq:exp-phi-fx-2}
\end{align}
Note that
$$
\int_0^\infty \phi(z)(1-F_{|Z|}(z))^{\alpha-1} dz = \frac{1}{\alpha}=\int_0^\infty f_X(z)^\alpha dz.
$$
Put this together with \eqref{eq:exp-fy-phi-1}, \eqref{eq:exp-fy-phi-2}, \eqref{eq:exp-phi-fx-1}, \eqref{eq:exp-phi-fx-2}. We obtain inequalities \eqref{eq:exp-y-x-1} and \eqref{eq:exp-y-x-2}. 

\textbf{Case 2: $\alpha= 1$}. In this case, the task is to show that
\begin{align}\label{eq:exp-y-x-3}
\int_{\mathbb{R}}f_Y(z)\log f_Y(z) dz &\le \int_0^\infty f_X(z)\log f_X(z) dz.
\end{align}
Substituting $f_Y(z)=f_Z(z)(1+u(z))$ and using the property that $u(z)$ is odd, one can check that
\begin{align}\label{eq:exp-fy-phi-3}
\int_{\mathbb{R}}f_Y(z)\log f_Y(z) dz & =2\int_0^\infty f_Z(z)\log f_Z(z)dz \notag\\
&\quad+\int_0^\infty f_Z(z)[(1+u(z))\log (1+u(z))+(1-u(z))\log (1-u(z))]dz \notag\\
&=\int_0^\infty \phi_Z(z)\log \phi_Z(z)dz-\log 2 \notag\\
&\quad+\int_0^\infty f_Z(z)[(1+u(z))\log (1+u(z))+(1-u(z))\log (1-u(z))]dz \notag\\
&\le \int_0^\infty \phi_Z(z)\log \phi_Z(z)dz.
\end{align}
In the last inequality, we use the following inequality
$$
(1+t)\log(1+t)+(1-t)\log(1-t)\le 2\log 2 \quad \text{for}~|t|\le 1.
$$
Invoke inequality \eqref{eq:phi-bound}. Then we obtain
\begin{align}\label{eq:exp-phi-fx-3}
\int_0^\infty \phi_Z(z)\log \phi_Z(z)dz &\le \int_0^\infty \phi_Z(z)\log (1-F_{|Z|}(z))dz\\
&=-1=\int_0^\infty f_X(z)\log f_X(z)dz \notag.
\end{align}
This, together with \eqref{eq:exp-fy-phi-3}, gives the desired inequality \eqref{eq:exp-y-x-3}.

\textbf{Equality case.} The following statement is always in the almost everywhere sense. Equalities in \eqref{eq:exp-y-x-1}, \eqref{eq:exp-y-x-2} and \eqref{eq:exp-y-x-3} force equalities in \eqref{eq:exp-phi-fx-1}, \eqref{eq:exp-phi-fx-2} and \eqref{eq:exp-phi-fx-3} to hold. Then we must have $\phi(z)=1-F_{|Z|}(z)$, which yields $\phi(z)+\phi'(z)=0$ for $z\ge 0$. Thus we have $\phi(z)=e^{-z}$ for $z\ge 0$ and therefore $f_Z(z)=\frac{1}{2}e^{-|z|}$. This, together with the inversion formula \eqref{eq:pdf-y}, gives $f_Y(z)=e^z\mathbbm{1}_{\mathbb{R}_-}(z)$. Hence, $Y$ has the same distribution as $-X$.
\end{proof}

\begin{rmk}
The exponential distribution enables the deconvolution that represents the density of $Y$ in terms of the density of $Z$ (see equation $\eqref{eq:pdf-y}$). The central inequality is
$$
\phi(z)+\phi'(z)\ge 0.
$$
We prove that, subject to this inequality, exponential minimizes R\'enyi entropies over all probability densities on $\mathbb{R}_+$. Also, observe that the above inequality yields that
$$
r(z):=\frac{\phi(z)}{1-F(z)}\le 1.
$$
Hence, among distributions with hazard rate bounded by 1, the exponential minimizes every R\'enyi entropy.
\end{rmk}

Let $U$ and $V$ be real-valued random variables. We say that $V$ has \textit{first-order stochastic dominance} over $U$ if $\P(U\ge t)\le \P(V\ge t)$ for all $t\in\R$, which is equivalent to that $\E f(U)\le \E f(V)$ for all non-decreasing functions $f$.

\begin{thm}
Let $\lambda>0$. Let $X\sim \text{Exp}(\lambda)$ be an exponential random variable with the probability density function $f_X(x)=\lambda e^{-\lambda x}\mathbbm{1}_{\mathbb{R}_+}(x)$. Let $Y$ be an independent symmetrizer of $X$.  Then we have
$$
\Var (Y)\ge \Var(X).
$$
Equality holds if and only if $Y$ is an independent copy of $-X$.
\end{thm}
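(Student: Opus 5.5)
By independence, $\Var(Z)=\Var(X)+\Var(Y)$, so the claim $\Var(Y)\ge\Var(X)$ is equivalent to $\Var(Z)\ge 2\Var(X)=2\lambda^{-2}$. By scaling we may take $\lambda=1$. If $\Var(Y)=\infty$ there is nothing to prove, so we assume $Y$ has finite variance. Then $Z$ has finite second moment, and by symmetry $\E Z=0$, so $\Var(Z)=\E|Z|^2$. It therefore suffices to show $\E|Z|^2\ge 2=\E X^2$.

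This is exactly where the first-order stochastic dominance defined just before the statement is used. As in the proof of Theorem \ref{thm:symm-exp-ent}, let $\phi=2f_Z$ on $[0,\infty)$ be the density of $|Z|$. Inequality \eqref{eq:phi-bound} gives $\phi(z)\le 1-F_{|Z|}(z)$, i.e.\ the hazard rate of $|Z|$ is at most $1$. Put $G(z)=1-F_{|Z|}(z)$, which is absolutely continuous with $G'=-\phi\ge -G$. Then $(e^{z}G(z))'\ge 0$, and since $G(0)=1$ we get
$$\P(|Z|\ge t)=G(t)\ge e^{-t}=\P(X\ge t)\quad\text{for all } t\ge 0.$$
So $|Z|$ stochastically dominates $X$. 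Applying this to the non-decreasing function $t\mapsto t^2$ on $[0,\infty)$ gives
$$\E|Z|^2=\int_0^\infty 2t\,G(t)\,dt\ge\int_0^\infty 2t\,e^{-t}\,dt=2.$$

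For the equality case, $\Var(Y)=\Var(X)$ forces $\int_0^\infty 2t\,(G(t)-e^{-t})\,dt=0$ with a nonnegative continuous integrand. Hence $G(t)=e^{-t}$ for all $t\ge0$, so $\phi(z)=e^{-z}$ and $f_Z(z)=\tfrac12 e^{-|z|}$. The inversion formula \eqref{eq:pdf-y} then gives $f_Y(z)=e^{z}\mathbbm{1}_{\R_-}(z)$, that is, $Y\overset{d}{=}-X$. Conversely, $Y=-X'$ with $X'$ an independent copy of $X$ clearly attains equality.

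The only delicate point is the Gronwall step. It needs the absolute continuity of $G$ and the almost-everywhere inequality $\phi\le G$, both of which come from Proposition \ref{prop:inversion} and \eqref{eq:phi-bound}. Beyond that, the argument is the same hazard-rate comparison already used for the entropies.
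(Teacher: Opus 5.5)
Your proof is correct and follows the paper's argument essentially step for step. You reduce to $\E Z^2\ge 2$ via $\Var(Z)=\Var(X)+\Var(Y)$, use the hazard-rate bound \eqref{eq:phi-bound} to get $\P(|Z|\ge t)\ge e^{-t}$, conclude by stochastic dominance, and recover the equality case through the inversion formula \eqref{eq:pdf-y}. Two details differ only cosmetically: you use $(e^{z}G)'\ge 0$ rather than the paper's $(\log G)'\ge -1$, which avoids needing $G>0$, and you use the tail formula $\E|Z|^2=\int_0^\infty 2tG(t)\,dt$, which makes the equality conclusion $G(t)=e^{-t}$ slightly more explicit than in the paper.
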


\begin{proof}
The theorem trivially holds if $\Var(Y)=\infty$. Thus we assume that $\Var(Y)<\infty$. We again assume $\lambda=1$, write $Z=X+Y$, and denote by $\phi(z)$ and $F_{|Z|}(z)$ the probability density function and the cumulative distribution function of $|Z|$, respectively. We have shown in the proof of Theorem \ref{thm:symm-exp-ent} (inequality \eqref{eq:phi-bound}) for almost all $z\ge 0$ that
$$
\phi(z)\le 1-F_{|Z|}(z),
$$ 
which implies that
$$
\frac{d}{dz}\log (1-F_{|Z|}(z))=-\frac{\phi(z)}{1-F_{|Z|}(z)}\ge -1.
$$
Therefore we have for all $z\ge 0$ that
$$
\P(|Z|\ge z)=1-F_{|Z|}(z)\ge e^{-z}=\P(X\ge z).
$$
This means that $|Z|$ has first-order stochastic dominance over $X$. Since $z^2$ is increasing, we have
$$
\Var (Z)=\E Z^2=\E|Z|^2\ge \E X^2=2. 
$$
Then we obtain 
$$
\Var (Y)=\Var(Z)-\Var (X)\ge 1=\Var(X).
$$

\textbf{Equality case}. As the proof reveals, if the identity $\Var (Y)=\Var(X)$ holds, it is necessary to have for almost all $z\ge 0$ that $\phi(z)=1-F_{|Z|}(z)$. This gives the same conclusion as discussed in the equality case of Theorem \ref{thm:symm-exp-ent}. Hence, $Y$ must have the same distribution as $-X$.
\end{proof}

We next establish a sharp concentration bound for $Y$, which implies the majorization of $X$ over $Y$. We first prove the following lemma.

\begin{lem}\label{lem:hazard ineq}
Let $\lambda>0$. Let $\phi$ be a probability density on $[0,\infty)$. Suppose that
$$
\phi(t)\leq\lambda \int_t^\infty\phi(s)\,ds \quad\text{for almost every }t\geq0.
$$
Then, for every Borel measurable set $B\subset[0,\infty)$ of finite measure $|B|$, we have
$$
\int_B\phi(t)\,dt\leq 1-e^{-\lambda|B|}.
$$
\end{lem}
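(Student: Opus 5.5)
The plan is to reduce the lemma to a rearrangement statement: among sets of measure $|B|$, the mass $\int_B\phi$ is maximized by a "top level set" of $\phi$. Then we control the distribution of the values of $\phi$ through the hazard-rate constraint. Write $G(t)=\int_t^\infty\phi(s)\,ds$, the survival function. $G$ is absolutely continuous and nonincreasing, with $G'=-\phi$ almost everywhere. The hypothesis reads $\phi\le\lambda G$.

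\emph{Key step: change of variables $u=G(t)$.} Set $\mu(B)=\int_B\phi(t)\,dt$. Pushing forward Lebesgue measure restricted to $\{\phi>0\}$ under $G$, the map $t\mapsto G(t)$ transports $\phi(t)\,dt$ to Lebesgue measure $du$ on $[0,1]$. Since $dt=du/\phi(t)$ and $\phi\le\lambda G=\lambda u$, we get $dt\ge du/(\lambda u)$. For $B$ given, let $C=G(B\cap\{\phi>0\})\subset[0,1]$. Then
\[
\mu(B)=|C|,\qquad |B|\ge\int_B\mathbbm{1}_{\{\phi>0\}}\,dt=\int_C\frac{du}{\lambda\,\psi(u)}\ge\int_C\frac{du}{\lambda u},
\]
where $\psi(u)=\phi(t)$ for $u=G(t)$. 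The map is well defined up to null sets, since $G$ is strictly decreasing on the support of $\phi$ apart from flat pieces carrying zero $\phi$-mass.

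\emph{Optimization on $[0,1]$.} Fix $|C|=m$. The integral $\int_C du/(\lambda u)$ is minimized by pushing $C$ to the top, i.e. $C=[1-m,1]$, because $1/u$ is decreasing. This gives
\[
|B|\ge\frac1\lambda\int_{1-m}^1\frac{du}{u}=-\frac1\lambda\log(1-m),
\]
so $m\le 1-e^{-\lambda|B|}$, which is the claim.

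\emph{Main obstacle.} The delicate point is justifying the change of variables $\int_B\phi\,dt=|G(B)|$ and the inequality $\int_C du/\psi(u)\le|B|$ rigorously for an arbitrary Borel set $B$. I would do this via the area/coarea formula for the absolutely continuous monotone function $G$: for nonnegative Borel $g$,
\[
\int_B g(G(t))\,\phi(t)\,dt=\int_{G(B)}g(u)\,du,
\]
valid because $G$ is monotone (multiplicity at most one off a null set). Applying it with $g(u)=1$ and with $g(u)=1/\psi$, suitably defined, handles both displays.

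\emph{Alternative route.} To avoid $\psi$, apply the formula with $g(u)=1/(\lambda u)$ directly:
\[
\int_{G(B)}\frac{du}{\lambda u}=\int_B\frac{\phi(t)}{\lambda G(t)}\,dt\le|B|,
\]
using the hypothesis pointwise. This is the cleanest version of the argument, and I would present it this way.
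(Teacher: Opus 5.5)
Your proof is correct, but it takes a genuinely different route from the paper. The paper never changes variables. It sets $U(t)=1-\int_0^t\mathbf{1}_B\phi\,ds$ and $L(t)=|B\cap[0,t]|$, and notes that $U\ge G$, since the $\phi$-mass of $B\cap[0,t]$ cannot exceed that of $[0,t]$. Hence $\phi\le\lambda G\le\lambda U$, so $e^{\lambda L}U$ is nondecreasing, whence $1-\mu(B)=U(\infty)\ge e^{-\lambda|B|}$. In your terms, both proofs establish $\lambda|B|\ge\int_B\phi/G\,dt\ge-\log(1-\mu(B))$. The paper gets the second inequality from $G\le U$ together with the exact identity $\int_B\phi/U\,dt=-\log(1-\mu(B))$, so the rearrangement happens ``in place''. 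You get it by pushing $\mu|_B$ forward under $G$ to Lebesgue measure on $G(B)\subset[0,1]$ and applying the bathtub principle to the decreasing weight $1/u$. The paper's version is shorter and fully elementary: it uses only absolute continuity of $U$ and $L$ and the product rule, with no area formula and no measurability question for $G(B)$. Yours makes the extremal structure explicit, since equality forces $G(B)=[1-m,1]$ up to a null set and $\phi=\lambda G$ a.e.\ on $B$.

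Two small repairs. First, in your first display the middle term should be $\int_C du/\psi(u)$, not $\int_C du/(\lambda\psi(u))$. As written, both the equality and the following inequality are off by a factor $\lambda$, so present the alternative route, as you intend.

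Second, that route does not need the general area formula. The set $G(B)$ is analytic, hence Lebesgue measurable. For disjoint Borel $B_1,B_2$, the intersection $G(B_1)\cap G(B_2)$ lies in the countable set of $u$ for which $G^{-1}(u)$ is a nondegenerate interval. So $B\mapsto|G(B)|$ is a measure agreeing with $\mu$ on intervals, hence equal to $\mu$. Since $G(B\cap G^{-1}(E))=G(B)\cap E$, the pushforward of $\mu|_B$ is Lebesgue measure on $G(B)$, which gives $\int_B g(G)\,\phi\,dt=\int_{G(B)}g\,du$. Take $g(0)=0$: on $\{G=0\}$ one has $\phi=0$ a.e., so the pointwise bound $\phi\,g(G)\le1$ holds a.e.\ there as well.
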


\begin{proof}
Set $S(t)=\int_t^\infty\phi(s)\,ds$. Define
$$
U(t)=1-\int_0^t\mathbf1_B(s)\phi(s)\,ds,
\qquad
L(t)=\int_0^t\mathbf1_B(s)\,ds.
$$
It is clear that
$$
U(t)\geq1-\int_0^t\phi(s)\,ds=S(t).
$$
Consequently, for almost every $t\geq0$, we have
$$
\frac{d}{dt}\bigl(e^{\lambda L(t)}\cdot U(t)\bigr)=e^{\lambda L(t)}\mathbf1_B(t)\bigl(\lambda U(t)-\phi(t)\bigr)\geq0.
$$
Since $U(0)=1$ and $L(0)=0$, we obtain
$$
U(t)\geq e^{-\lambda L(t)}.
$$
Let $t\to\infty$, and use $|B|<\infty$ to obtain
$$
1-\int_B\phi(s)\,ds\geq e^{-\lambda|B|}.
$$
This is the desired inequality.
\end{proof}

\begin{thm}\label{thm:concentration-exp}
Let $\lambda>0$. Let $X\sim \text{Exp}(\lambda)$ be an exponential random variable with the probability density function $f_X(x)=\lambda e^{-\lambda x}\mathbbm{1}_{\mathbb{R}_+}(x)$. Let $Y$ be an independent symmetrizer of $X$.  For every Borel measurable set $A\subset \R$ of finite measure $|A|$, we have
\begin{equation}\label{eq:concentration-exp}
\P(Y\in A)\le 1-e^{-\lambda |A|}.
\end{equation}
Equality can be achieved by $Y\overset{d}{=}-X$ and $A=[-|A|, 0]$.
\end{thm}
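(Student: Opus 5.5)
We reduce to Lemma \ref{lem:hazard ineq} using the inversion formula. By scaling, or by simply keeping $\lambda$ throughout, we work with general $\lambda$. Let $Z=X+Y$, and let $\phi(t)=2f_Z(t)$ for $t\ge0$ be the density of $|Z|$. Proposition \ref{prop:inversion} gives
$$
f_Y(z)=f_Z(z)+\lambda^{-1}f_Z'(z).
$$
By the symmetry of $Z$ we also have $f_Y(-z)=f_Z(z)-\lambda^{-1}f_Z'(z)$. Adding these two identities, for almost every $t\ge0$,
$$
f_Y(t)+f_Y(-t)=2f_Z(t)=\phi(t).
$$

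For a Borel set $A$ of finite measure, write $A_+=A\cap[0,\infty)$ and $A_-=-(A\cap(-\infty,0))\subset(0,\infty)$, and set $B=A_+\cup A_-\subset[0,\infty)$. Then $|B|\le |A_+|+|A_-|=|A|$. Since $f_Y\ge0$, we have $f_Y(t)\le\phi(t)$ and $f_Y(-t)\le\phi(t)$ for a.e. $t\ge 0$. Hence
$$
\P(Y\in A)=\int_{A_+}f_Y(t)\,dt+\int_{A_-}f_Y(-t)\,dt .
$$
On $A_+\cap A_-$ the two integrands add to at most $f_Y(t)+f_Y(-t)=\phi(t)$, and elsewhere each integrand is bounded by $\phi$. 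This gives $\P(Y\in A)\le\int_B\phi(t)\,dt$.

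Next we verify the hypothesis of Lemma \ref{lem:hazard ineq}. Inequality \eqref{eq:diff-ineq} gives $\phi+\lambda^{-1}\phi'\ge0$ a.e. on $[0,\infty)$. Integrating from $t$ to $\infty$, as in \eqref{eq:phi-bound} but with $\lambda$ kept, yields $\phi(t)\le\lambda\int_t^\infty\phi(s)\,ds$. This integration needs $\phi(s)\to0$ as $s\to\infty$ and absolute continuity of $f_Z$. Both hold because $f_Z(z)=\lambda e^{-\lambda z}\int_{-\infty}^z f_Y(y)e^{\lambda y}\,dy$ is locally absolutely continuous, and $\phi$ is monotone-controlled and integrable by the Gronwall bound \eqref{eq:gronwall}. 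The lemma then gives $\int_B\phi\le1-e^{-\lambda|B|}\le1-e^{-\lambda|A|}$, which proves \eqref{eq:concentration-exp}.

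For sharpness, take $Y\overset{d}{=}-X$ and $A=[-a,0]$. Then
$$
\P(Y\in A)=\P(X\le a)=1-e^{-\lambda a}.
$$
The main obstacle is the regularity needed to justify $\phi(t)=-\int_t^\infty\phi'$, i.e.\ the absolute continuity of $f_Z$ and its decay at infinity. I would handle this through the explicit convolution formula above, which is the same formula the paper uses to show $f_Z>0$. The folding step is elementary once we note that the overlap is controlled by $f_Y(t)+f_Y(-t)=\phi(t)$.
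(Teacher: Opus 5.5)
Your proposal is correct and follows essentially the same route as the paper. You fold $A$ onto $B=A_+\cup(-A_-)\subset[0,\infty)$ using $f_Y(t)+f_Y(-t)=2f_Z(t)=\phi(t)$, bound $\P(Y\in A)\le\int_B\phi$ with $|B|\le|A|$, derive $\phi(t)\le\lambda\int_t^\infty\phi$ from $\phi+\lambda^{-1}\phi'\ge0$, and apply Lemma \ref{lem:hazard ineq}; your use of the convolution formula to justify the absolute continuity and decay of $f_Z$ supplies a regularity step the paper leaves implicit.
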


\begin{proof}
We have established in Proposition \ref{prop:inversion} the following inversion formula 
\begin{equation}\label{eq:inversion-conc}
f_Y(z)=f_Z(z)+\lambda^{-1}f_Z'(z)
\end{equation}
for almost all $z\in\R$. The positivity of $f_Y$ yields 
$$
|f_Z'(z)|\le \lambda f_Z(z), \quad f_Y(z)\le 2f_Z(z)
$$ 
for almost all $z\in\R$. Decompose $A=A_+\cup A_-$, where $A_+=A\cap [0, \infty)$ and $A_-=A\cap (-\infty, 0)$. We have
$$
\P(Y\in A) = \left(\int_{A_+}+\int_{A_-}\right)(f_Z(z)+\lambda^{-1}f'(z))dz.
$$
Since $Z$ is symmetric, we have $f'(-z)=-f_Z'(z)$ for almost all $z\in\R$. Then we have
$$
\int_{A_-}(f_Z(z)+\lambda^{-1}f'(z))dz=\int_{-A_-}(f_Z(z)-\lambda^{-1}f'(z))dz.
$$
Putting them together to obtain
\begin{align}\label{eq:prob y-in-a}
\P(Y\in A) &=\int_{A_+\setminus -A_-}(f_Z(z)+\lambda^{-1}f'(z))dz+2\int_{A_+\cap(-A_-)}f(z)dz  \notag\\
&\quad +\int_{ -A_-\setminus A_+}(f_Z(z)-\lambda^{-1}f'(z))dz\notag\\
&\le \int_{A_+\cup (-A_-)}(2f_Z(z))dz\notag\\
&=\int_B\phi(z)dz.
\end{align}
Here, $B=A_+\cup (-A_-)$ and $\phi(z)=2f_Z(z)$ for $z\ge 0$. Clearly, $|B|\le |A_+|+|-A_-|=|A|$.
The inversion formula \eqref{eq:inversion-conc} and the positivity of $f_Y$ gives $\phi(z)+\lambda^{-1}\phi'(z)\ge0$ for almost all $z\in\R$,
from which we obtain
$$
\phi(z)=-\int_z^\infty \phi'(x)dx\le \lambda \int_z^\infty \phi(x)dx.
$$
Then we apply Lemma \ref{lem:hazard ineq} to obtain 
$$
\int_B\phi(z)dz\le 1-e^{-\lambda |B|}\le 1-e^{-\lambda |A|}.
$$
Combined with \eqref{eq:prob y-in-a}, this yields the desired result.
\end{proof}

\begin{rmk}\label{rmk:concentration}
Concentration inequality \eqref{eq:concentration-exp} can be interpreted as the majorization of $X$ over $Y$. For two integrable functions $f, g: \R\to\R_+$ such that $\int_{\R}f(x)dx=\int_{\R}g(x)dx$, we say that $f$ is \textit{majorized} by $g$ if it holds for all $t\ge 0$ that
$$
\int_{\R}(f(x)-t)_+dx\le \int_{\R}(g(x)-t)_+dx.
$$
A sufficient condition for the majorization of $g$ over $f$ given in Proposition 2.4 of \cite{FL25} is that, for any Borel measurable set $A$ of finite measure, there exists a Borel measurable set $B$ with equal measure as $A$ such that
$$
\int_Af(x)dx\le \int_Bg(x)dx.
$$
Let $f_X$ and $f_Y$ denote the probability density functions of $X$ and $Y$, respectively. Inequality \eqref{eq:concentration-exp} can then be rewritten as, for every Borel measurable set $A\subset \R$ of finite measure $|A|$, we have
$$
\int_A f_Y(z)dz\le \int_0^{|A|}f_X(z)dz.
$$
Thus we have the majorization of $f_X$ over $f_Y$. By Proposition 7.3 of \cite{WM14}, for every convex function $\varphi: \R_+\to\R$ with $\varphi(0)=0$ and continuous at 0, it holds that
$$
\int_{\R}\varphi(f_Y(z))dz\le \int_{\R}\varphi(f_X(z))dz.
$$
This implies the entropic comparisons \eqref{eq:exp-renyi-y-x} and \eqref{eq:exp-tsallis-y-x} by taking
$$
\varphi(t)=
\begin{cases}
t^\alpha & \text{for}~\alpha>1\\
t\log t & \text{for}~\alpha=1\\
-t^\alpha & \text{for}~\alpha<1.
\end{cases}
$$
\end{rmk}


\section{Geometric symmetrization}

\begin{defn}[Discrete R\'enyi and Tsallis entropies]
Let $X$ be a discrete random variable with the probability mass function $\{f(k)\}_{k\in\mathbb{Z}}$. For $\alpha\in (0, 1)\cup (1, \infty)$, the R\'enyi entropy $H_\alpha(X)$ of order $\alpha$ is defined by
$$
H_\alpha(X) = \frac{1}{1-\alpha}\log \sum_{k\in\mathbb{Z}}f(k)^\alpha
$$
and the Tsallis entropy $T_\alpha(X)$ of order $\alpha$ is defined by
$$
T_\alpha(X) = \frac{1}{1-\alpha}\left(\sum_{k\in\mathbb{Z}}f(k)^\alpha-1\right).
$$
Taking the limit $\alpha\to 1$, both entropies are reduced to Shannon entropy 
$$
H(X)=-\sum_{k\in\mathbb{Z}}f(k)\log f(k).
$$ 
\end{defn}

\begin{thm}\label{thm:symm-geo-ent}
Let $0<p<1$ and write $q:=1-p$. Let $X\sim\text{Geom}(p)$ be a geometric random variable with the probability mass function $f_X(k)=pq^k$ for $k\in \mathbb{Z}_{\ge 0}$. Let $Y$ be an integer-valued independent symmetrizer of $X$. Then we have for all $\alpha>0$ that
\begin{equation}\label{eq:geo-renyi-y-x}
H_\alpha (Y)\ge H_\alpha(X)
\end{equation}
and
\begin{equation}\label{eq:geo-tsallis-y-x}
T_\alpha (Y)\ge T_\alpha(X).
\end{equation}
Equality holds if and only if $Y$ is an independent copy of $-X$.
\end{thm}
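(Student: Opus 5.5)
The plan is to run the discrete counterpart of the majorization route of Remark \ref{rmk:concentration}. I will first prove the concentration bound $\P(Y\in A)\le 1-q^{|A|}$ for finite $A\subset\Z$, which says that $f_X$ majorizes $f_Y$. Then Karamata's inequality (the discrete form of Proposition 7.3 of \cite{WM14}) applied to $\varphi(t)=t^\alpha$, $t\log t$, $-t^\alpha$ gives \eqref{eq:geo-renyi-y-x} and \eqref{eq:geo-tsallis-y-x} at once.

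\textbf{Step 1 (inversion).} Write $Z=X+Y$ and $f_Z(k)=\sum_{j\ge0}pq^jf_Y(k-j)$. Subtracting $q f_Z(k-1)$ gives
$$
pf_Y(k)=f_Z(k)-qf_Z(k-1).
$$
Symmetry $f_Z(-k)=f_Z(k)$ then gives $pf_Y(-k)=f_Z(k)-qf_Z(k+1)$. Positivity of $f_Y$ yields $qf_Z(k\pm1)\le f_Z(k)$, and also $f_Z>0$ everywhere, by the same argument as in the exponential case.

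Summing the second identity over $m\ge k$ telescopes to
$$
p\,\P(Y\le -k)=f_Z(k)+p\,\P(Z>k).
$$
This gives the discrete hazard-rate inequality
$$
q f_Z(k)\le p\,\P(Z>k),\qquad k\ge0.
$$

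\textbf{Step 2 (reduction to $|Z|$).} For $k\ge1$ the inversion identities give
$$
p\bigl(f_Y(k)+f_Y(-k)\bigr)=2f_Z(k)-q\bigl(f_Z(k-1)+f_Z(k+1)\bigr),
$$
and $pf_Y(0)=(1-q)f_Z(0)$, i.e. $f_Y(0)=f_Z(0)$. As in \eqref{eq:prob y-in-a}, I fold $A$ onto $B=\{|k|:k\in A\}$ with $|B|\le|A|$. On each $k\in B$, I bound the contribution of $A\cap\{k,-k\}$ by a weight $w(k)$: either a single value $f_Y(\pm k)\le (f_Z(k)-qf_Z(k\mp1))/p$, or the pair sum above. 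I then aim to prove $\sum_{k\in B}w(k)\le 1-q^{|B|}$.

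The natural discrete analogue of Lemma \ref{lem:hazard ineq} is a product (multiplicative Gronwall) argument. Process $B$ in increasing order and track $U=1-(\text{mass already collected})$. Using the hazard inequality, show that each new element of $B$ multiplies $U$ by a factor at least $q$.

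\textbf{Main obstacle.} Unlike the continuous case, the pair sum is not simply $2f_Z(k)$; the terms $qf_Z(k\pm1)$ couple neighbouring sites, and the site $0$ behaves differently. So the right weights and the right ``surviving mass'' $U$ must be chosen with care. My first attempt is to define the weights through the $Y$-tail representation $f_Z(k)=p\sum_{m\ge k}q^{m-k}f_Y(-m)$ for $k\ge0$. Then each collected atom $f_Y(\pm k)$ can be charged against a geometric fraction of $\P(|Y|\ge k)$-type tails, with equality tracked for $Y\overset{d}{=}-X$, $A=\{0,-1,\dots,-(n-1)\}$.

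\textbf{Step 3 (equality).} For $\alpha\ne1$ the functions $\varphi$ are strictly convex, so equality forces the decreasing rearrangement of $f_Y$ to equal $f_X$. In particular $\max_k f_Y(k)=p$ and $\sum_{k\in A}f_Y(k)=1-q^{|A|}$ for the top-$n$ sets $A$, so equality holds in every step of the chain. Tracing equality back through Step 1 forces $qf_Z(k)=p\,\P(Z>k)$ for all $k\ge0$. This makes $|Z|$ have the law of $X-X'$ folded, hence $f_Z(k)=\frac{p}{1+q}q^{|k|}$. The inversion formula then gives $f_Y(k)=pq^{-k}\mathbbm 1_{k\le0}$, so $Y\overset{d}{=}-X$. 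The case $\alpha=1$ is handled identically using strict convexity of $t\log t$.
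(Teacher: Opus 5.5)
Your overall route (concentration bound, then majorization, then Karamata) is legitimate; it is the route indicated in the paper's closing remark. However, Step 2 carries all the weight and is not carried out, and the reduction you do state is false. With $B=\{|k|:k\in A\}$ and the target $\sum_{k\in B}w(k)\le 1-q^{|B|}$, a pair $\{m,-m\}\subset A$ contributes $f_Y(m)+f_Y(-m)$ but is counted only once in $|B|$. So for $A=\{m,-m\}$ you would need $f_Y(m)+f_Y(-m)\le p$, and this fails. Let $Y=-X'+S$, with $X'$ an independent copy of $X$, $S$ uniform on $\{-m,m\}$, $m\ge1$, all independent. Then $X+Y=(X-X')+S$ is symmetric, $f_Y(m)=p/2$ and $f_Y(-m)=\frac{p}{2}(1+q^{2m})$. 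Hence $f_Y(m)+f_Y(-m)=p(1+\tfrac{1}{2}q^{2m})>p$ (e.g.\ $\frac{9}{16}>\frac{1}{2}$ for $p=\frac{1}{2}$, $m=1$), even though $\P(Y\in A)\le 1-q^2$ does hold. Your identity at the site $0$ is also wrong: $pf_Y(0)=f_Z(0)-qf_Z(1)$, not $pf_Z(0)$. For $Y\overset{d}{=}-X$ one has $f_Y(0)=p$ but $f_Z(0)=p/(1+q)$. Both symptoms say that $0$ is the wrong centre of folding in the discrete problem. (Minor: the hazard inequality of Step 1 does not follow from the identity you telescoped. Sum the first identity over $m\ge k+1$ instead, which gives $p\,\P(Y\ge k+1)=p\,\P(Z>k)-qf_Z(k)\ge 0$.)

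The missing idea is to fold about $\tfrac{1}{2}$, pairing $k$ with $1-k$. Adding $pf_Y(k)=f_Z(k)-qf_Z(k-1)$ and $pf_Y(1-k)=f_Z(k-1)-qf_Z(k)$, the $q$-terms combine exactly and give $f_Y(k)+f_Y(1-k)=f_Z(k)+f_Z(k-1)=:\phi(k)$. This is a probability mass function on $\Z_{\ge1}$ with $\phi(k+1)\ge q\phi(k)$, hence $\phi(k)\le pR_k$ where $R_k=\sum_{j\ge k}\phi(j)$. Taking $B=(A\cap\Z_{\ge1})\cup(1-(A\cap\Z_{\le0}))$ gives $|B|\le|A|$ and $\P(Y\in A)\le\sum_{k\in B}\phi(k)$. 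Your multiplicative Gronwall then works with $U_j=1-\sum_{i\le j}\phi(b_i)$, since $\phi(b_j)\le pR_{b_j}\le pU_{j-1}$ gives $U_j\ge qU_{j-1}$; this is the paper's concentration theorem.

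For the present theorem the paper uses the same $\phi$ but bypasses majorization. Sub/superadditivity of $t^\alpha$ on each pair $\{k,1-k\}$ compares $\sum f_Y^\alpha$ with $\sum\phi^\alpha$. The monotonicity of $(1-t)^\alpha/(1-t^\alpha)$ in $t_k=R_{k+1}/R_k\in[q,1]$ (and its $\alpha=1$ analogue) compares $\sum\phi^\alpha$ with $\sum f_X^\alpha$. Equality then forces $t_k=q$, so $\phi(k)=pq^{k-1}$, from which $f_Z$ and then $f_Y$ are recovered.

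Your Step 3, by contrast, has no chain to trace back through as written. Even with the corrected chain $f_Y\prec\phi\prec f_X$, you would still need strict Schur-convexity for infinite sequences to conclude that the decreasing rearrangement of $\phi$ is $f_X$. You would then need an induction using $\phi(k)\le pR_k$ to place the values and get $\phi(k)=pq^{k-1}$.
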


\begin{proof}
We first prove the statement for the case $\alpha\neq 1$; the case $\alpha=1$ follows from a similar argument. 

\textbf{Case 1: $\alpha\ne 1$}. Let $f_Y$ be the probability mass function of $Y$. By definitions of R\'enyi and Tsallis entropies, both \eqref{eq:geo-renyi-y-x} and \eqref{eq:geo-tsallis-y-x} for $\alpha\neq 1$ are equivalent to
\begin{align}
\sum_{k\in \mathbb{Z}} f_Y(k)^\alpha &\ge \sum_{k=0}^\infty f_X(k)^\alpha\quad \text{if}~0<\alpha<1, \label{eq:geo-renyi-y-x-1}\\
\sum_{k\in \mathbb{Z}} f_Y(k)^\alpha &\le \sum_{k=0}^\infty f_X(k)^\alpha\quad \text{if}~\alpha>1. \label{eq:geo-tsallis-y-x-1}
\end{align}
Below we prove \eqref{eq:geo-renyi-y-x-1} and \eqref{eq:geo-tsallis-y-x-1} by introducing a probability mass function $\{\phi(k)\}_{k=1}^\infty$ whose moment can be sandwiched between the corresponding moments of $f_X$ and $f_Y$. 

We write $Z:=X+Y$ and denote by $f_Z$ the probability mass function of $Z$. We define
\begin{equation}\label{eq:phi}
\phi(k):=f_Z(k)+f_Z(k-1).
\end{equation} 
By the symmetry $f_Z(k)=f_Z(-k)$, we have
$$
\sum_{k=1}^\infty \phi(k)=f_Z(0)+2\sum_{k=1}^\infty f_Z(k)=\sum_{k\in\Z}f_Z(k)=1.
$$
Thus $\{\phi(k)\}_{k=1}^\infty$ is indeed a probability mass function. We next show the identity
\begin{equation}\label{eq:phi-1}
\phi(k)=f_Y(k)+f_Y(1-k).
\end{equation}
By definition, we have 
$$
f_Z(k)=\sum_{j=0}^\infty f_X(j)f_Y(k-j)=\sum_{j=0}^\infty pq^jf_Y(k-j),
$$
from which we obtain the inversion formula
\begin{equation}\label{eq:pmf-y}
pf_Y(k)=f_Z(k)-qf_Z(k-1).
\end{equation}
Using the symmetry of $f_Z$, we obtain from \eqref{eq:pmf-y} that
$$
pf_Y(1-k)=f_Z(k-1)-qf_Z(k).
$$
Combine these two identities to obtain
$$
f_Y(k)+f_Y(1-k)=f_Z(k)+f_Z(k-1).
$$
This, together with the definition of $\phi$ in \eqref{eq:phi}, gives identity \eqref{eq:phi-1}. 

Using identity \eqref{eq:phi-1} and the concavity of $t^\alpha$ for $0<\alpha<1$ (respectively, convexity for $\alpha>1$), we have
\begin{align*}
f_Y(k)^\alpha+f_Y(1-k)^\alpha & \ge \phi(k)^\alpha\quad \text{if}~0<\alpha<1,\\
f_Y(k)^\alpha+f_Y(1-k)^\alpha & \le \phi(k)^\alpha\quad \text{if}~\alpha>1.
\end{align*}
Therefore, we have for $0<\alpha<1$ that
\begin{equation}\label{eq:moment-f_y-phi-1}
\sum_{k\in\mathbb{Z}}f_Y(k)^\alpha =\sum_{k=1}^\infty (f_Y(k)^\alpha+f_Y(1-k)^\alpha)\ge \sum_{k=1}^\infty \phi(k)^\alpha,
\end{equation}
and for $\alpha>1$ that
\begin{equation}\label{eq:moment-f_y-phi-2}
\sum_{k\in\mathbb{Z}}f_Y(k)^\alpha\le \sum_{k=1}^\infty \phi(k)^\alpha.
\end{equation}
Therefore, for $0<\alpha<1$ (respectively, $\alpha>1$), the $\alpha$-th moment of $f_Y$ is bounded below (respectively, above) by the corresponding moment of $\phi$.

We next establish an analogous relationship between $f_X$ and $\phi$. Write $\phi(k)=R_k-R_{k+1}$, where the tail probability $R_k$ is defined as
$$
R_k:=\sum_{j=k}^\infty \phi(j).
$$
By the non-negativity of $f_Y$ and equation \eqref{eq:pmf-y}, we have
$$
f_Z(k)\ge qf_Z(k-1),
$$
which, together with \eqref{eq:phi}, gives
\begin{equation}\label{eq:phi-k-k+1}
\phi(k+1)\ge q\phi(k).
\end{equation}
This implies the inequality
\begin{equation}\label{eq:T-k-k+1}
R_{k+1}\ge qR_k.
\end{equation}
One can check that $(1-t)^\alpha/(1-t^\alpha)$ is an increasing (respectively, decreasing) function on $[0, 1)$ for $0<\alpha<1$ (respectively, $\alpha>1$). Write $t_k:=R_{k+1}/R_k\in [q, 1]$. For $0<\alpha<1$, we apply inequality \eqref{eq:T-k-k+1} to obtain
\begin{align*}
\phi(k)^\alpha &=R_k^\alpha(1-t_k)^\alpha=\frac{(1-t_k)^\alpha}{1-t_k^\alpha}\cdot R_k^\alpha(1-t_k^\alpha)\\
&\ge \frac{(1-q)^\alpha}{1-q^\alpha}(R_k^\alpha-R_{k+1}^\alpha)\\
&=\frac{p^\alpha}{1-q^\alpha}(R_k^\alpha-R_{k+1}^\alpha).
\end{align*}
Note $R_1=1$. Sum up over $k\ge 1$ and obtain
$$
\sum_{k=1}^\infty \phi(k)^\alpha\ge \frac{p^\alpha}{1-q^\alpha}=\sum_{k=0}^\infty f_X(k)^\alpha.
$$
For $\alpha>1$, the two above inequalities are reversed and we obtain
$$
\sum_{k=1}^\infty \phi(k)^\alpha\le \sum_{k=0}^\infty f_X(k)^\alpha.
$$
Then, together with \eqref{eq:moment-f_y-phi-1} and \eqref{eq:moment-f_y-phi-2}, we obtain \eqref{eq:geo-renyi-y-x-1} and \eqref{eq:geo-tsallis-y-x-1}. 

\textbf{Case 2: $\alpha=1$}. In this case, the task is to show that
\begin{equation}\label{eq:geo-renyi-y-x-3}
\sum_{k\in \mathbb{Z}} f_Y(k)\log f_Y(k) \le \sum_{k=0}^\infty f_X(k)\log f_X(k).
\end{equation}
For the left hand side, we have
\begin{align}\label{eq:moment-f_y-phi-3}
\sum_{k\in \mathbb{Z}} f_Y(k)\log f_Y(k) &=\sum_{k=1}^\infty [f_Y(k)\log f_Y(k)+f_Y(1-k)\log f_Y(1-k)] \notag\\
&\le \sum_{k=1}^\infty\phi(k)\log \phi(k).
\end{align}
The inequality follows from identity \eqref{eq:phi-1} and that, for $a, b\ge0, a+b>0$, we have
$$
a\log\frac{a}{a+b}+b\log\frac{a}{a+b}\le 0.
$$
Substituting $\phi(k)=R_k-R_{k+1}$ and $t_k=R_{k+1}/R_k$, one can verify that
$$
\phi(k)\log \phi(k)=\phi(k)\left(\log(1-t_k)+\frac{t_k\log t_k}{1-t_k}\right)+R_k\log R_k-R_{k+1}\log R_{k+1}.
$$
One can check that the function $\log(1-t)+(t\log t)/(1-t)$ is non-increasing on $(0, 1)$. Since $t_k\in [q, 1]$, $R_k\to 0$ as $k\to\infty$, and $R_1=1$, we sum the above identity over $k$ to obtain
\begin{align*}
\sum_{k=1}^\infty\phi(k)\log \phi(k) &\le \left(\log (1-q)+\frac{q\log q}{1-q}\right)\sum_{k=1}^\infty \phi(k)\\
&=\log p+\frac{q}{p}\log q\\
&=\sum_{k=0}^\infty f_X(k)\log f_X(k).
\end{align*}
This, together with \eqref{eq:moment-f_y-phi-3}, yields \eqref{eq:geo-renyi-y-x-3}.

\textbf{Equality case.} As our proof reveals, equalities in \eqref{eq:geo-renyi-y-x} and \eqref{eq:geo-tsallis-y-x} 
 forces the equation
$R_{k+1}=qR_k$, from which we obtain $R_k=q^{k-1}$ and therefore 
$\phi(k)=pq^{k-1}$. Then we solve equation \eqref{eq:phi} and obtain
$$
f_Z(k)=\frac{pq^{|k|}}{1+q},\quad k\in\mathbb{Z}.
$$
This, together with the inversion formula \eqref{eq:pmf-y}, gives 
$$
f_Y(k)=
\begin{cases}
0 & \text{for}~k\ge 1,\\
pq^{|k|} & \text{for}~k\le 0.
\end{cases}
$$
Therefore $Y$ has the same distribution as $-X$. 
\end{proof}

\begin{rmk}
The inversion formula of $f_Y$ in \eqref{eq:pmf-y} can be rewritten as
$$
f_Y(k)=f_Z(k)+\frac{q}{p}(f_Z(k)-f_Z(k-1)),
$$
which is the discrete analogy of equation \eqref{eq:pdf-y}. We can apply the symmetry of $f_Z$ to obtain
\begin{align*}
\sum_{k\in\mathbb{Z}}f_Y(k)^\alpha &=\sum_{k=1}^\infty f_Z(k)^\alpha\Bigg\{ 
\left[1+\frac{q}{p}\left(1-\frac{f_Z(k-1)}{f_Z(k)}\right)\right]^\alpha+ \left[1-\frac{1}{p}\left(1-\frac{f_Z(k-1)}{f_Z(k)}\right)\right]^\alpha \Bigg\}.
\end{align*}
By equation \eqref{eq:pmf-y} and the non-negativity of $f_Y$, we have $f_Z(k)\ge qf_Z(k-1)$. Since $Z$ is symmetric, we flip $k$ to $-k$ and obtain $f_Z(k)\ge qf_Z(k+1)$. These yield
$$
q\le \frac{f_Z(k)}{f_Z(k-1)}\le q^{-1},
$$
that is
$$
-pq^{-1}\le 1-\frac{f_Z(k-1)}{f_Z(k)}\le p.
$$
For all $t\in [-pq^{-1}, p]$, we have
\begin{align*}
(1+p^{-1}qt)^\alpha+(1-p^{-1}t)^\alpha &\ge (1+q)^\alpha \quad\quad\text{if}~0<\alpha<1\\
(1+p^{-1}qt)^\alpha+(1-p^{-1}t)^\alpha &\le (1+q^{-1})^\alpha \quad\text{if}~\alpha>1.
\end{align*}
Then we arrive at
\begin{align*}
\sum_{k\in\mathbb{Z}}f_Y(k)^\alpha &\ge \sum_{k=1}^\infty((1+q)f_Z(k))^\alpha \quad\quad\text{if}~0<\alpha<1\\
\sum_{k\in\mathbb{Z}}f_Y(k)^\alpha  &\le \sum_{k=1}^\infty((1+q^{-1})f_Z(k))^\alpha \quad\text{if}~\alpha>1.
\end{align*}
Neither $\{(1+q)f_Y(k)\}_{k=1}^\infty$ nor $\{(1+q^{-1})f_Y(k)\}_{k=1}^\infty$ are the appropriate intermediate functions. This is the reason for the intermediate probability mass function $\{\phi(k)\}_{k=1}^\infty$ being defined as $\phi(k)=f_Y(k)+f_Y(1-k)$. 
\end{rmk}

\begin{thm}\label{thm:symm-geo-var}
Let $0<p<1$ and write $q:=1-p$. Let $X\sim\text{Geom}(p)$ be a geometric random variable with the probability mass function $f_X(k)=pq^k$ for $k\in \mathbb{Z}_{\ge 0}$. Let $Y$ be an integer-valued independent symmetrizer of $X$. Then we have
$$
\Var(Y)\ge \Var(X).
$$
Equality holds if and only if $Y$ is an independent copy of $-X$.
\end{thm}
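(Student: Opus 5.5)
The plan is to copy the variance argument for the exponential case. We reduce to a second-moment bound for $Z=X+Y$ and prove that bound by stochastic dominance, using the auxiliary probability mass function $\phi$ from the proof of Theorem \ref{thm:symm-geo-ent}.

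\textbf{Reduction.} We may assume $\Var(Y)<\infty$, since otherwise there is nothing to prove. Then $\Var(Z)=\Var(X)+\Var(Y)$, and symmetry of $Z$ gives $\E Z=0$, so $\Var(Z)=\E Z^2$. Since $\Var(X)=q/p^2$, the claim $\Var(Y)\ge \Var(X)$ is equivalent to
$$
\E Z^2\ge \frac{2q}{p^2}.
$$

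\textbf{Rewriting $\E Z^2$ through $\phi$.} Recall $\phi(k)=f_Z(k)+f_Z(k-1)$ for $k\ge 1$. For a function $g$ on $\Z_{\ge1}$, reindexing gives
$$
\sum_{k\ge1}g(k)\phi(k)=g(1)f_Z(0)+\sum_{k\ge1}\bigl(g(k)+g(k+1)\bigr)f_Z(k).
$$
I would choose $g(k)=k^2-k=k(k-1)$. Then $g(1)=0$ and $g(k)+g(k+1)=2k^2$. Combined with the symmetry of $Z$, this yields the identity
$$
\E Z^2=2\sum_{k\ge1}k^2f_Z(k)=\E[W(W-1)],
$$
where $W$ is a random variable on $\{1,2,\dots\}$ with mass function $\phi$. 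Exchanging sums is harmless because all terms are nonnegative.

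\textbf{Stochastic dominance.} From inequality \eqref{eq:T-k-k+1} we have $R_{k+1}\ge qR_k$, and $R_1=1$. Hence
$$
\P(W\ge k)=R_k\ge q^{k-1}=\P(X+1\ge k)\quad\text{for all } k\ge1.
$$
So $W$ first-order stochastically dominates $X+1$. The function $w\mapsto w(w-1)$ is nondecreasing on $\{1,2,\dots\}$, so
$$
\E[W(W-1)]\ge \E[(X+1)X]=\E X^2+\E X=\frac{q(1+q)}{p^2}+\frac qp=\frac{2q}{p^2}.
$$
This is exactly the bound required in the reduction step.

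\textbf{Equality case.} This is the only delicate step. Write $\E[W(W-1)]=\sum_{k\ge2}\bigl(g(k)-g(k-1)\bigr)R_k$. Each increment $g(k)-g(k-1)=2(k-1)$ is strictly positive for $k\ge2$. Therefore equality forces $R_k=q^{k-1}$ for all $k\ge2$, and this holds trivially at $k=1$. As in the equality case of Theorem \ref{thm:symm-geo-ent}, this gives $\phi(k)=pq^{k-1}$ and $f_Z(k)=pq^{|k|}/(1+q)$. The inversion formula \eqref{eq:pmf-y} then yields $Y\overset{d}{=}-X$. Conversely, $Y\overset{d}{=}-X$ clearly attains equality.
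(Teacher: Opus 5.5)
Your proposal is correct and follows the paper's proof essentially step for step: the same reduction to $\E Z^2\ge 2q/p^2$, the same identity $\E Z^2=\E[W(W-1)]$ for the auxiliary mass function $\phi$, and the same first-order stochastic dominance of $W$ over $X+1$ derived from $R_{k+1}\ge qR_k$. Your summation-by-parts formula $\E[W(W-1)]=\sum_{k\ge2}2(k-1)R_k$, which has strictly positive weights, is a small improvement. It justifies rigorously the paper's bare assertion that equality forces $R_k=q^{k-1}$ for all $k$, that is, that $W-1$ and $X$ have the same distribution.
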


\begin{proof}
The theorem trivially holds if $\Var(Y)=\infty$. Thus we will assume that $\Var(Y)<\infty$. Write $Z=X+Y$. Since $X$ and $Y$ are independent, we have 
$$
\Var(Z)=\Var(X)+\Var(Y).
$$
Therefore, it would suffice to show that
\begin{equation}\label{eq:var-z-x}
\Var(Z)\ge 2\Var(X). 
\end{equation}
Let $W$ be a random variable with probability mass function $\{\phi(k)\}_{k=1}^\infty$ as defined in \eqref{eq:phi}. Clearly, $\P(W\ge k)=\sum_{j=k}^\infty \phi(j)=R_k$. Then we rewrite inequality \eqref{eq:T-k-k+1} as
$$
\P(W\ge k+1)\ge q\P(W\ge k),
$$
which, by induction, gives for $k\ge 0$ that
$$
\P(W\ge k+1)\ge q^k.
$$
Note that $\P(X\ge k)=q^k$ for $k\ge 0$. Thus $W-1$ has first-order stochastic dominance over $X$. By the symmetry $f_Z(j)=f_Z(-j)$, we have
\begin{align*}
\E [W(W-1)]&=\sum_{k=1}^\infty k(k-1)\phi(k)\notag\\
&=\sum_{k=1}^\infty k(k-1)(f_Z(k)+f_Z(k-1)) \notag\\
&=\sum_{k=1}^\infty k(k-1)f_Z(k)+\sum_{k=0}^\infty (k+1)kf_Z(k)\notag\\
&=2\sum_{k=1}^\infty k^2f_Z(k)=\sum_{k\in\mathbb{Z}}^\infty k^2f_Z(k)=\E Z^2=\Var (Z).
\end{align*}
Since $W-1$ has first-order stochastic dominance over $X$ and $n(n+1)$ is increasing on $\mathbb{Z}_{\ge 0}$,  we have that
$$
\E [W(W-1)]\ge \E[X(X+1)]=\frac{2q}{p^2}=2\Var(X).
$$
Combining the above estimates of $\E[W(W-1)]$, we obtain the desired inequality \eqref{eq:var-z-x}. 

\textbf{Equality case.} As the proof reveals, the equation $\Var (Y)=\Var (X)$ forces that $W-1$ and $X$ have the same distribution. Therefore we obtain 
\begin{align*}
\phi(k) &=\mathbb{P}(W\ge k)-\mathbb{P}(W\ge k+1)\\
&=\mathbb{P}(X\ge k-1)-\mathbb{P}(X\ge k)\\
&=pq^{k-1}. 
\end{align*}
As discussed in the equality case of Theorem \ref{thm:symm-geo-ent}, we can then solve equation \eqref{eq:phi} to obtain $f_Z$, which together with the inversion formula \eqref{eq:pmf-y}, will give $f_Y$. Then one can see that $Y$ has the same distribution as $-X$. 
\end{proof}

We next establish a concentration inequality analogous to Theorem \ref{thm:concentration-exp} for geometric random variables.

\begin{lem}\label{lem:discrete-hazard}
Let $0<p<1$. Let $\{\phi(k)\}_{k\ge1}$ be a probability mass function. Set $R_k=\sum_{j=k}^{\infty}\phi(j)$.
Suppose that $\phi(k)\le p R_k$ for all $k\ge1$. Then, for every finite set $B\subset\Z_{\ge1}$, we have
$$
\sum_{k\in B}\phi(k)\le 1-q^{|B|}.
$$
\end{lem}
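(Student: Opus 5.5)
We will mimic the proof of Lemma \ref{lem:hazard ineq}, replacing the Gronwall-type differential argument by a discrete multiplicative recursion. Throughout, $q=1-p$ and $R_1=1$. For $n\ge 1$, define the partial ``unused mass''
$$
U_n:=1-\sum_{k\in B,\,k<n}\phi(k),\qquad L_n:=|B\cap\{1,\dots,n-1\}|,
$$
so that $U_1=1$ and $L_1=0$. The goal is to show $U_n\ge q^{L_n}$ for all $n$. Letting $n\to\infty$ (in fact $n>\max B$ suffices, since $B$ is finite) then gives $1-\sum_{k\in B}\phi(k)\ge q^{|B|}$, which is the claim.

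The first step is the comparison $U_n\ge R_n$. This holds because
$$
U_n\ge 1-\sum_{k<n}\phi(k)=R_n,
$$
where the last equality uses that $\phi$ is a probability mass function on $\Z_{\ge1}$.

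The second step is the induction $U_{n+1}\ge q^{L_{n+1}}$, obtained from the case at $n$ by splitting on whether $n\in B$.
\begin{itemize}
\item If $n\notin B$, then $U_{n+1}=U_n$ and $L_{n+1}=L_n$, so there is nothing to prove.
\item If $n\in B$, then $U_{n+1}=U_n-\phi(n)$. The hypothesis $\phi(n)\le pR_n$ combined with the first step gives $\phi(n)\le pU_n$. Hence $U_{n+1}\ge (1-p)U_n=qU_n\ge q\cdot q^{L_n}=q^{L_{n+1}}$.
\end{itemize}
This is exactly the discrete analogue of the monotonicity of $e^{\lambda L(t)}U(t)$ in the continuous lemma: here the quantity $q^{-L_n}U_n$ is non-decreasing in $n$.

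There is no serious obstacle. The only point needing care is the first step: the hazard-rate hypothesis is stated in terms of the full tail $R_n$, while the recursion requires a bound in terms of $U_n$. The inequality $R_n\le U_n$ bridges this gap, and it holds because removing only the $B$-part of the mass below $n$ leaves at least the full tail. The finiteness of $B$ makes the limiting step trivial.
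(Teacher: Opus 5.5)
Your proposal is correct and is essentially the paper's proof. The paper indexes the recursion by the ordered elements $b_1<\cdots<b_m$ of $B$ instead of by every $n$. It uses the same comparison $R_{b_k}\le U_{k-1}$ to get $U_k\ge qU_{k-1}$, and then iterates to reach $U_m\ge q^m$.
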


\begin{proof}
There is nothing to prove when $B=\emptyset$. Otherwise, we write
$B=\{b_1<\cdots<b_m\}$ and define
$$
U_0=1,\quad U_k=1-\sum_{j=1}^{k}\phi(b_j) \quad \text{for}~ 1\le k\le m.
$$
Since $b_1,\ldots,b_{k-1}<b_k$, we have
$$
R_{b_k}=1-\sum_{i<b_k}\phi(i)\le 1-\sum_{j<k}\phi(b_j)=U_{k-1}.
$$
Consequently, we obtain
$$
\phi(b_k)\le pR_{b_k}\le pU_{k-1},
\quad U_k=U_{k-1}-\phi(b_k)\ge qU_{k-1}.
$$
Iteration gives $U_m\ge q^m$, which is the asserted inequality.
\end{proof}

\begin{thm}\label{thm:concentration-geo}
Let $0<p<1$ and write $q:=1-p$. Let $X\sim\text{Geom}(p)$ be a geometric random variable with the probability mass function $f_X(k)=pq^k$ for $k\in \mathbb{Z}_{\ge 0}$. Let $Y$ be an integer-valued independent symmetrizer of $X$. Then, for every finite set $A\subset\Z$, we have
\begin{equation}\label{eq:concentration-geo}
\P(Y\in A)\le 1-q^{|A|}.
\end{equation}
The estimate is sharp for every prescribed cardinality and equality can be achieved by
$Y\stackrel{d}{=}-X$
and 
$A=\{1-|A|, \cdots, -1, 0\}$.
\end{thm}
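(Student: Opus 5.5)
The argument mirrors the proof of Theorem \ref{thm:concentration-exp}, with Lemma \ref{lem:discrete-hazard} replacing Lemma \ref{lem:hazard ineq} and the pairing $k\leftrightarrow 1-k$ from identity \eqref{eq:phi-1} replacing the reflection $z\leftrightarrow -z$. Write $Z=X+Y$, and let $\phi(k)=f_Z(k)+f_Z(k-1)$ for $k\ge1$ be the probability mass function from \eqref{eq:phi}. By \eqref{eq:phi-1}, $\phi(k)=f_Y(k)+f_Y(1-k)$. Define the folding map $\tau:\Z\to\Z_{\ge1}$ by $\tau(k)=k$ for $k\ge1$ and $\tau(k)=1-k$ for $k\le0$; it is two-to-one onto $\Z_{\ge1}$, with fibres $\{k,1-k\}$. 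Given a finite $A\subset\Z$, set $B=\tau(A)\subset\Z_{\ge1}$, so that $|B|\le|A|$. Since $f_Y\ge0$, we get
$$
\P(Y\in A)=\sum_{k\in A}f_Y(k)\le \sum_{j\in B}\bigl(f_Y(j)+f_Y(1-j)\bigr)=\sum_{j\in B}\phi(j).
$$

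Next I verify the hypothesis of Lemma \ref{lem:discrete-hazard}, namely $\phi(k)\le pR_k$ with $R_k=\sum_{j\ge k}\phi(j)$. From \eqref{eq:phi-k-k+1} we have $\phi(j+1)\ge q\phi(j)$ for all $j\ge1$, which gives $R_{k+1}\ge qR_k$ as in \eqref{eq:T-k-k+1}. Hence
$$
\phi(k)=R_k-R_{k+1}\le (1-q)R_k=pR_k.
$$
Lemma \ref{lem:discrete-hazard} then yields $\sum_{j\in B}\phi(j)\le 1-q^{|B|}\le 1-q^{|A|}$, using $|B|\le|A|$ and $q<1$. This proves \eqref{eq:concentration-geo}.

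For sharpness, take $Y\overset{d}{=}-X$, so $f_Y(-k)=pq^k$ for $k\ge0$, and $A=\{1-m,\dots,0\}$. Then $\P(Y\in A)=\sum_{k=0}^{m-1}pq^k=1-q^m$, which attains the bound.

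The only delicate point is passing from the recursion $R_{k+1}\ge qR_k$ to the hazard bound. I should confirm that \eqref{eq:phi-k-k+1} holds for every $k\ge1$, including $k=1$, where $\phi(1)=f_Z(1)+f_Z(0)$. The inequality $f_Z(k)\ge qf_Z(k-1)$ holds for all integers $k$, so summing it at $k$ and $k+1$ gives $\phi(k+1)\ge q\phi(k)$ for every $k\ge1$. Summing over $j\ge k$ then gives $R_{k+1}\ge qR_k$, with all tails finite since $\phi$ is a probability mass function. Everything else is a direct substitution.
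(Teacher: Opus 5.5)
Your proposal is correct and follows the paper's proof essentially step for step: the same folding $B=A_+\cup(1-A_-)$ via the pairing identity \eqref{eq:phi-1}, and the same application of Lemma \ref{lem:discrete-hazard}. The only cosmetic difference is how you get $\phi(k)\le pR_k$: you read it off from $R_{k+1}\ge qR_k$, whereas the paper sums $\phi(k+r)\ge q^r\phi(k)$ as a geometric series; the two derivations are equivalent.
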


\begin{proof}
Let $f_Y$ and $f_Z$ denote the probability mass functions of $Y$ and $Z$, respectively. Decompose $A=A_+\cup A_-$, where $A_+=A\cap \Z_{\ge 1}$ and $A_-=A\cap \Z_{\le 0}$. Then we have
\begin{align}\label{eq:prob y-in-a-geo}
\P(Y\in A) &=\sum_{k\in A_+}f_Y(k)+\sum_{k\in A_-}f_Y(k) \notag\\
&=\sum_{k\in A_+}f_Y(k)+\sum_{k\in 1-A_-}f_Y(1-k)\notag\\
&\le\sum_{k\in A_+\cup (1-A_-)}(f_Y(k)+f_Y(1-k))\notag\\
&=\sum_{k\in B}\phi(k).
\end{align}
Here, $B=A_+\cup (1-A_-)$ and $\{\phi(k)\}_{k=1}^\infty$ is a probability mass function defined in \eqref{eq:phi} and \eqref{eq:phi-1} as
$$
\phi(k)=f_Z(k-1)+f_Z(k)=f_Y(1-k)+f_Y(k).
$$
We have proved in \eqref{eq:phi-k-k+1} that $\phi(k+1)\ge q\phi(k)$, which gives $\phi(k+r)\ge q^r\phi(k)$ for all $r\ge0$, and thus
$$
R_k=\sum_{j=k}^{\infty}\phi(j)\ge\sum_{r=0}^{\infty}q^r\phi(k)=\frac{\phi(k)}p.
$$
Then we apply Lemma~\ref{lem:discrete-hazard} to obtain 
$$
\sum_{k\in B}\phi(k)\le 1-q^{|B|}\le 1-q^{|A|}.
$$
Combined with \eqref{eq:prob y-in-a-geo}, this gives the concentration bound \eqref{eq:concentration-geo}.
\end{proof}

\begin{rmk}
Analogous to Remark \ref{rmk:concentration}, concentration inequality \eqref{eq:concentration-geo} yields the majorization of $X$ over $Y$, and this implies the entropic comparisons \eqref{eq:geo-renyi-y-x} and \eqref{eq:geo-tsallis-y-x}.
\end{rmk}


{\bf Acknowledgement.} The problem of symmetrizing exponential random variables was communicated to us by Mokshay Madiman. This work is supported by the National Natural Science Foundation of China (NSFC) grant 62201175.


\end{document}